\newtheorem{theorem}{Theorem}
\newtheorem{corollary}{Corollary}
\newtheorem{proposition}{Proposition}
\newtheorem{lemma}{Lemma}
\newtheorem{defi}{Definition}
\newcommand{\p}{\Bbb{P}}
\newcommand{\e}{\Bbb{E}}
\newcommand{\R}{\mathbb{R}}
\newcommand{\ud}{\mathrm{d}}
\begin{document}

\title{Refracted Continuous-State Branching Processes:
Self-regulating populations}

\maketitle

\begin{center}

{\sc A. Murillo-Salas\footnote{ {Departamento de Matem\'aticas, Universidad de Guanajuato.} E-mail: amurillos@ugto.mx},  J.L. P\'erez\footnote{ {CIMAT.} E-mail: jluis.garmendia@cimat.mx} and A. Siri-J\'egousse\footnote{ {IIMAS, Universidad Nacional Aut\'onoma de M\'exico.} E-mail: arno@sigma.iimas.unam.mx} }
 \end{center}
\vspace{0.2in}

\begin{abstract}{ 
 We construct a modified continuous-state branching process whose Malthusian parameter is replaced by another one when passing below a certain level. 
The construction is obtained via a Lamperti-like transform applied to a refracted L\'evy process. 
Infinitesimal generator, probability of vanishing at infinity, of explosion and some path properties are also provided.}
\noindent
\bigskip

\noindent {\sc Key words and phrases}:   Branching processes, Refracted L\'evy processes, Scale functions.

\bigskip

\noindent MSC 2000 subject classifications: 60J85, 92D25.
\end{abstract}

\vspace{0.5cm}

\maketitle

\section{Introduction}


Naturally reproducing populations, when reaching a low level, might try to struggle not to extinct by increasing their reproduction rate.
Imagine a population model with (global) self-regulation such that whenever the population size goes below a certain value, every individual will equitably increase its reproduction success.

Size-dependent continuous-state branching processes have been the object of many studies in the literature (see \cite{JK} and references therein).
This paper deals with a very precise case where it is possible to provide explicit  results in terms of the scale function of an associated L\'evy process.
In particular, it may be of reader's interest to notice that this paper treats with continuous-state branching processes with discontinuous size-dependent Malthusian parameter.
 As a consequence of this fact,  it is not possible to characterise such processes as the strong solution of a stochastic differential equation, as it is described for example in Li and Pu \cite{LP}.

As mentioned before, our results strongly rely on the theory  of scale functions for spectrally positive L\'evy processes, which we shall introduce now. 
Let $X=\{X_t, t\geq 0\}$ be a L\'evy process defined on a  probability space $(\Omega, \mathcal{F}, \p)$.  For each $x\in \R$ denote by $\p_x$ the law of $X$ when it is started at $x$ and write for convenience  
$\p$ in place of $\p_0$. Accordingly, we shall write $\e_x$ and $\e$ for the associated expectation operators. In this paper we shall assume throughout that $X$ is \textit{spectrally negative} meaning here that it has 
no positive jumps and that it is not { the negative} of a subordinator. It is well known that the latter allows us to talk about the Laplace exponent, $\psi:[0,\infty) \to \R$, i.e.\[
\e\Big[{\rm e}^{\theta X_t}\Big]:={\rm e}^{t\psi(\theta)}, \qquad t, \theta\ge 0,
\]
and the Laplace exponent $\psi$ is given by the L\'evy-Khintchine formula
\begin{equation*}
\psi(\theta)={-\beta}+\gamma\theta+\frac{\sigma^2}{2}\theta^2+\int_{(-\infty,0)}\big({\rm e}^{\theta x}-1-{\theta x{1}_{\{x>-1\}}}\big)\Pi(\ud x),
\end{equation*}
where $\gamma\in \R$, ${\beta},\sigma^2\ge 0$ and $\Pi$ is a measure on $(-\infty,0)$ called the L\'evy measure of $X$ and satisfies
\[
\int_{(-\infty,0)}(1\land x^2)\Pi(\ud x)<\infty.
\]
The reader is referred to Bertoin \cite{B} and Kyprianou \cite{K} for a complete introduction to the theory of L\'evy processes.

It is well-known that $X$ has paths of bounded variation if and only if $\sigma^2=0$ and $\int_{(-1, 0)} x\Pi(\mathrm{d}x)$ is finite. In this case $X$ can be written as
\begin{equation}
X_t=x+ct-S_t, \,\,\qquad t\geq 0,
\label{BVSNLP}
\end{equation}
where {$c=\gamma-\int_{(-1,0)} x\Pi(\mathrm{d}x)$} and {$S=(S_t,t\geq0)$} is a driftless subordinator {killed at rate $\beta$}. Note that  necessarily $c>0$, since we have ruled out the case that $X$ has monotone paths. In this case its Laplace exponent is given by
\begin{equation*}
\psi(\theta)= \log \mathbb{E} \left[ \mathrm{e}^{\theta X_1} \right] = {-\beta}+c \theta-\int_{(-\infty,0)}\big(1- {\rm e}^{\theta x}\big)\Pi(\ud x),\,\,\theta\geq0.
\end{equation*}

Let $\delta$ be a positive constant such that 
\begin{equation}
\mathrm{({\bf H})}\qquad\delta<c=\gamma-\int_{(-1,0)} x\Pi(\mathrm{d}x)\qquad\text{when $X$ has paths of bounded variation.}\notag
\end{equation}
Now consider the process $U=\{U_t,t\geq0\}$ which is a solution to the following stochastic differential equation:
{$U_0=-x$ and}
\begin{equation}\label{SDE}
d U_t=d X_t-\delta\mathbf{1}_{\{U_t>-b\}}d t, \qquad t> 0,
\end{equation}
i.e., a linear drift at rate $\delta>0$ is subtracted from the increments of
$X$ whenever it exceeds a pre-specified level $-b$.
According to Kyprianou and Loeffen \cite{KL} Theorem 1, condition ({\bf H})  ensures that there exists a unique strong solution to \eqref{SDE} and the paths of $U$ are not monotone.

Now let us consider $b>0$, and let us assume that $\overline{X}$ is a 
 spectrally positive L\'evy process started from $x>0$ 
with Laplace exponent
 \begin{equation}\label{lk}
\overline\psi(\theta)={-\beta}-\gamma\theta+\frac{\sigma^2}{2}\theta^2+\int_{(0,\infty)}\big({\rm e}^{\theta x}-1-\theta x{1_{\{x<1\}}}\big)\overline\Pi(\ud x),
\end{equation}
where $\gamma\in \R$, ${\beta},\sigma^2\ge 0$ and $\overline\Pi(\mathrm{d}x)=\Pi(\mathrm{d}(-x))$ is a measure on $(0,\infty)$ which satisfies
\[
\int_{(0,\infty)}({1}\land x^2)\overline\Pi(\ud x)<\infty.
\]
Let us assume that the following condition holds
\begin{equation}
\mathrm{(\overline{{\bf H}})}\qquad\delta<c=\gamma+\int_{(0,1)} x\overline{\Pi}(\mathrm{d}x)\qquad\text{when $\overline{X}$ has paths of bounded variation.}\notag
\end{equation}
In this case we can define the process $U$ as a solution to (\ref{SDE}) driven by $-\overline{X}$. That is {$U_0=-x$ and}
$$
d U_t=-d {\overline X}_t-\delta\mathbf{1}_{\{U_t>-b\}}d t, \qquad t> 0.
$$
This follows from the fact that $-\overline{X}$ is a spectrally negative L\'evy process. Let us define $\overline{U}\equiv -U$,  it follows that
\begin{align}
\overline{U}_t&=\overline{X}_t+\delta\int_0^t\mathbf{1}_{\{U_s>-b\}}ds=\overline{X}_t+\delta\int_0^t\mathbf{1}_{\{\overline{U}_s<b\}}ds\notag.
\end{align}
This implies that, for any spectrally positive L\'evy process $\overline{X}$ started from $x$, there exists a  unique strong solution to the following stochastic differential equation: $\overline{U}_0=x$
\[d\overline{U}_t=d\overline{X}_t+\delta\mathbf{1}_{\{\overline{U}_t<b\}}dt,\qquad t>0,\]
equivalently 
\begin{equation}\label{SDE2}
\overline{U}_t=\overline{X}_t+\delta\int_0^t\mathbf{1}_{\{\overline{U}_s<b\}}ds, \qquad t\geq 0,
\end{equation}
thus any time the process $\overline{U}$ is below level $b$ a positive drift $\delta$ is added.

The paper is organised as follows. In Section \ref{RefractedSection} we formally define refracted continuous-state branching processes (RCSBP). In Section \ref{extexp} we   study useful classical  functionals such as the law of the total progeny, the probability of extinction and explosion. Finally, in Section \ref{illmj}
we investigate a law of iterated logarithm and the maximal jump of RCSBP.


\section{Refracted continuous-state branching processes}\label{RefractedSection}

\subsection{The Lamperti-like representation}\label{rcsbpLamperti}

Recall that continuous-state branching processes are the continuous time and space version of the classical  Galton-Watson processes, which are Feller processes with state space $[0,\infty]$ such that the logarithm of the Laplace transform of its transition semigroup is given by an affine transformation of the initial state. To be more precise, let $Z=\{Z_t:t\geq 0\}$ be a continuous-state branching process, and then
{\begin{equation}
\frac{d}{dt}\log\left(\mathbb{E}_x[e^{-\lambda Z_t}]\right)\bigg|_{t=0}=x \overline\psi(\lambda),\qquad\text{$\lambda>0$}.\notag
\end{equation}
In fact, by the classical Lamperti transformation \cite{la1,cpu}  ${\overline\psi}$  must be  the Laplace exponent of a spectrally positive L\'evy process.  This is the content of the next theorem, which we state   without proof.  
 Among other things, it shows that for every such Laplace exponent $\overline\psi$ there exists an associated continuous-state branching process.
\begin{theorem}
Let $x\geq 0$ and $\overline{X}$ be a spectrally positive L\'evy process with Laplace exponent ${\overline\psi}$ defined in \eqref{lk}. The unique stochastic process $Z$ which solves
\begin{equation}\label{LT1}
Z_t=\overline{X}_{\int_0^tZ_sds},\qquad\text{$t\geq0$,}
\end{equation}
is a continuous-state branching process with branching mechanism ${\overline\psi}$ that starts at $x$.
\end{theorem}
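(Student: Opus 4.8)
The plan is to construct the process $Z$ from the L\'evy process $\overline{X}$ via the classical time-change (Lamperti) machinery and then verify it has the stated branching property. First I would introduce the additive functional $A_t = \int_0^t \overline{X}_s^{-1}\,\mathrm{d}s$ up to the first time $\overline{X}$ hits $0$ (or $\infty$), define its right-continuous inverse $\theta_t = \inf\{s\ge 0 : A_s > t\}$, and set $Z_t = \overline{X}_{\theta_t}$, with $Z$ absorbed at $0$ (and at $\infty$ in the explosive case). One checks that $t = \int_0^{\theta_t} \overline{X}_s^{-1}\,\mathrm{d}s$, which upon the substitution $u = A_s$, $s = \theta_u$ translates into $\theta_t = \int_0^t Z_u\,\mathrm{d}u$, so that $Z_t = \overline{X}_{\theta_t} = \overline{X}_{\int_0^t Z_u\,\mathrm{d}u}$; this shows $Z$ solves \eqref{LT1}. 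The care needed here is in handling the boundary behaviour at $0$ and the possibility $\int_0^\cdot \overline{X}_s^{-1}\,\mathrm{d}s < \infty$ (creeping to $0$ or explosion), which is where the hypothesis that $\overline{X}$ is spectrally positive and the finiteness properties of $\psi$ enter.

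Next I would establish the branching property, i.e.\ that the Laplace transform of $Z_t$ started from $x$ factorises as $\mathbb{E}_x[\mathrm{e}^{-\lambda Z_t}] = \mathrm{e}^{-x u_t(\lambda)}$ for some function $u_t(\lambda)$ solving the backward equation $\partial_t u_t(\lambda) = -\overline\psi(u_t(\lambda))$, $u_0(\lambda)=\lambda$. The cleanest route is the strong Markov property of $\overline{X}$ combined with the additivity of the clock: started from $x = x_1 + x_2$, one can run two independent copies of the construction driven by independent copies of $\overline{X}$ on the two ``pieces'' $x_1$ and $x_2$, using that $\overline{X}$ has stationary independent increments and that the time-change decomposes accordingly; the independence of the two resulting sub-populations gives the product form, and then differentiating in $t$ at $0$ and using \eqref{LT1} recovers the stated generator identity with $\overline\psi$. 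Alternatively, and perhaps more robustly, I would cite that this Lamperti correspondence is a bijection between spectrally positive L\'evy processes and CSBPs (Lamperti \cite{la1}, Caballero--Lambert--Uribe Bravo \cite{cpu}), which the excerpt explicitly allows me to invoke.

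For uniqueness of the solution to \eqref{LT1}, I would argue that any solution $Z$ gives rise, via its own additive functional $\int_0^\cdot Z_s\,\mathrm{d}s$ and the inverse time-change, to a process with the law of $\overline{X}$ run until absorption, and that inverting the time-change is a deterministic operation on paths; hence two solutions driven by the same $\overline{X}$ coincide pathwise up to the explosion/absorption time, and the absorbing conventions pin down the behaviour afterwards. The main obstacle I anticipate is not the formal time-change algebra but the boundary analysis: making precise that the clock $\int_0^t Z_s\,\mathrm{d}s$ does not run out of the lifetime of $\overline{X}$ prematurely, that $0$ and $\infty$ are handled consistently as absorbing states, and that the time-changed process is genuinely Feller on $[0,\infty]$ — this is exactly the delicate point in the Lamperti transform and the reason the theorem is quoted rather than proved in the excerpt.
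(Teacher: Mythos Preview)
The paper states this theorem \emph{without proof}, explicitly deferring to the classical Lamperti transformation references \cite{la1,cpu}; there is therefore no in-paper argument to compare your proposal against. Your sketch is a faithful outline of the standard Lamperti construction (time-change by the inverse of $\int_0^\cdot \overline{X}_s^{-1}\,\mathrm{d}s$, verification of \eqref{LT1}, branching property via independence of increments, uniqueness by inverting the clock), and you are right that the delicate boundary analysis at $0$ and $\infty$ is precisely why the authors cite rather than prove---so your proposal already does more than the paper does here.
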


Now, we apply the idea of the Lamperti transformation  to the process $\overline{U}$ provided  by (\ref{SDE2}) and construct a new process $V=\{V_t:t\geq0\}$ which will be  called    refracted continuous-state branching process. We formally introduce this construction in the  following definition.  
 
\begin{defi}
Let $\overline{U}$ be the process given by the unique solution to the stochastic differential equation given by (\ref{SDE2}) with $b>0$, then consider  the process $V=\{V_t:t\geq0\}$ given by:
\begin{equation}\label{rcsbp}
V_t=\overline{U}_{\int_0^tV_sds},\qquad\text{$t\geq0$}.
\end{equation}
We call the process $V$ \textbf{refracted continuous-state branching process} (RCSBP).
\end{defi}
The construction of the  stochastic process that satisfies \eqref{rcsbp} can be done by the method of time changes: let $\tau$ be the first hitting time of zero by the process $\overline{U}$, and let
\begin{equation}
i_t{=}\int_0^{\tau\wedge t}\frac{1}{\overline{U}_s}ds,\notag
\end{equation}
then consider its right-continuous inverse $c$.  Note that, 
\begin{equation}\label{totalprog}
c_t=\int_0^tV_s\,ds\equiv J_t,
\end{equation}
where $J_t$ is called the total progeny up to time $t$ which will be studied in Section \ref{tpext}. Therefore,  
 if we define
\begin{equation}
V=\overline{U}\circ c,\notag
\end{equation}
then $V$ satisfies \eqref{rcsbp}, and it is the unique solution for which zero is absorbing. A proof of this result for the case $\delta=0$ is given by Theorem 1 in \cite{cpu}, in this  case the transformation which takes $\overline U$ to $V$ is called the Lamperti transformation, introduced by Lamperti \cite{la1}. The proof of the result for $\delta>0$ follows the same lines as the above mentioned theorem, for sake of brevity we omit the details.

\subsection{The infinitesimal generator}\label{ig}
In this section we compute the form of the infinitesimal generator for the RCSBP  given by (\ref{rcsbp}). To do this, let $f\in\mathcal{C}^2_0(\mathbb{R}_+)$ (the space of non-negative functions vanishing at infinity with continuous second derivative)  and consider the operator
\begin{equation}
\Gamma f(x):={-\beta f(x)-}\gamma f'(x)+\frac{\sigma^2}{2}f''(x)+\int_0^{\infty}(f(x+z)-f(x)-zf'(x)){\overline\Pi}(dz).\notag
\end{equation}
We have the following result:
\begin{lemma}
{Let $\tilde X, \tilde U, \tilde J$ and $\tilde V$ be the analogues of $\overline X, \overline U, J$ and $V$ but without the killing component, i.e.,  $\tilde X$ has Laplace exponent
$$\tilde\psi(\theta)=-\gamma\theta+\frac{\sigma^2}{2}\theta^2+\int_{(0,\infty)}\big({\rm e}^{\theta x}-1-\theta x{1_{\{x<1\}}}\big)\overline\Pi(\ud x),
$$
where $\tilde U$ is obtained applying \eqref{SDE2} to $\tilde X$, $\tilde V=\tilde U\circ \tilde J$ and $\tilde J_t=\int_0^t\tilde V_sds$.}
For each $f\in\mathcal{C}^2_0(\mathbb{R}_+)$, the process $ N^f=\{ N^f_t,t\geq0\}$ given by 
\begin{equation}\label{mg}
{N}^f_t={e^{-\beta \tilde J_t}f(\tilde V_t)-\int_0^t e^{-\beta \tilde J_s}\tilde V_s\Gamma f(\tilde V_s)ds-\delta\int_0^te^{-\beta \tilde J_s}\tilde V_sf'(\tilde V_s)1_{\{\tilde V_s\leq b\}}ds,\quad t\geq0,}
\end{equation}
is a {{$\mathcal{F}_{\tilde J_t}$}-martingale}.
\end{lemma}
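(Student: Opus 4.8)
The plan is to build $N^f$ in three stages: first produce a Dynkin‐type martingale for the refracted driving process $\tilde X/\tilde U$, then transport it through the Lamperti time change to obtain a martingale attached to $\tilde V$, and finally reinstate the killing weight $e^{-\beta\tilde J_t}$ by an integration by parts. Write $\mathcal{A}g:=\Gamma g+\beta g$, so that $\mathcal{A}$ is the infinitesimal generator of $\tilde X$ (note that the very form of $\Gamma$, with the untruncated compensator $-zf'(x)$, presupposes $\int_{(1,\infty)}z\,\overline\Pi(\ud z)<\infty$, and we take this finite‐mean condition as a standing hypothesis; it also guarantees $\mathbb{E}_x[\tilde J_t]<\infty$). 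Since $\tilde X$ is spectrally positive with Laplace exponent $\tilde\psi$, the classical generator computation for L\'evy processes gives that $g(\tilde X_t)-\int_0^t\mathcal{A}g(\tilde X_s)\,\ud s$ is a martingale for every $g\in\mathcal{C}^2_0(\mathbb{R}_+)$. Because $\tilde U$ solves $\ud\tilde U_t=\ud\tilde X_t+\delta\mathbf{1}_{\{\tilde U_t<b\}}\,\ud t$ and the added term is continuous (so the jumps of $\tilde U$ coincide with those of $\tilde X$), It\^o's formula for semimartingales with jumps yields that
\[
f(\tilde U_t)-\int_0^t\Big(\mathcal{A}f(\tilde U_s)+\delta\mathbf{1}_{\{\tilde U_s<b\}}f'(\tilde U_s)\Big)\,\ud s
\]
is a martingale for every $f\in\mathcal{C}^2_0(\mathbb{R}_+)$; this integrand is simply the generator of $\tilde U$ applied to $f$.

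Next, recall from Section~\ref{rcsbpLamperti} the time change: with $i_t=\int_0^{\tau\wedge t}\tilde U_s^{-1}\,\ud s$ and $c$ its right‐continuous inverse, one has $\tilde V_t=\tilde U_{c_t}$, $c'_t=\tilde U_{c_t}=\tilde V_t$, and $c_t=\int_0^t\tilde V_s\,\ud s=\tilde J_t$. Evaluating the $\tilde U$-martingale at the $(\mathcal{F}_t)$-stopping time $c_t$ (localising along $c_t\wedge n$, letting $n\to\infty$, and using that the integrand is bounded together with $\mathbb{E}_x[\tilde J_t]<\infty$) and performing the substitution $s=c_r$, $\ud s=\tilde V_r\,\ud r$ in the time integral, one obtains that
\[
M_t:=f(\tilde V_t)-\int_0^t\tilde V_r\Big(\mathcal{A}f(\tilde V_r)+\delta\mathbf{1}_{\{\tilde V_r<b\}}f'(\tilde V_r)\Big)\,\ud r
\]
is a martingale with respect to the time‐changed filtration $(\mathcal{F}_{\tilde J_t})$. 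This is exactly the Lamperti computation used for the unrefracted case in \cite{cpu}; once $\tilde V$ has been absorbed at $0$ the clock $\tilde J$ is frozen and the prefactor $\tilde V_r$ in the integrand vanishes, so the statement remains consistent.

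Finally, $t\mapsto\tilde J_t$ is absolutely continuous with $\tilde J'_t=\tilde V_t$, hence $e^{-\beta\tilde J_t}$ has finite variation with $\ud(e^{-\beta\tilde J_t})=-\beta\tilde V_t e^{-\beta\tilde J_t}\,\ud t$, and the integration‐by‐parts formula together with the previous display gives
\[
\ud\big(e^{-\beta\tilde J_t}f(\tilde V_t)\big)=e^{-\beta\tilde J_t}\,\ud M_t+e^{-\beta\tilde J_t}\tilde V_t\,\Gamma f(\tilde V_t)\,\ud t+\delta\,e^{-\beta\tilde J_t}\tilde V_t\,\mathbf{1}_{\{\tilde V_t<b\}}f'(\tilde V_t)\,\ud t,
\]
where we used $\tilde V_t\mathcal{A}f(\tilde V_t)-\beta\tilde V_tf(\tilde V_t)=\tilde V_t\Gamma f(\tilde V_t)$. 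Integrating and rearranging gives $N^f_t=N^f_0+\int_0^t e^{-\beta\tilde J_s}\,\ud M_s$, once one observes that replacing $\mathbf{1}_{\{\tilde V_s<b\}}$ by $\mathbf{1}_{\{\tilde V_s\le b\}}$ is immaterial because $\tilde U$, and hence $\tilde V$, spends zero Lebesgue time at the level $b$. Since $e^{-\beta\tilde J_s}\in(0,1]$ is bounded and $(\mathcal{F}_{\tilde J_s})$-predictable, $\int_0^\cdot e^{-\beta\tilde J_s}\,\ud M_s$ is again an $(\mathcal{F}_{\tilde J_t})$-martingale, whence so is $N^f$.

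I expect the main obstacle to be the middle stage: making rigorous the transfer of the $\tilde U$-martingale (adapted to $(\mathcal{F}_t)$) into an $(\mathcal{F}_{\tilde J_t})$-martingale for $\tilde V$ under the Lamperti time change — the optional‐stopping/localisation argument, the attendant integrability bookkeeping, the behaviour once $\tilde V$ has been absorbed at $0$, and, should $\tilde V$ be able to explode, an extra truncation of $\tilde J$. Stages one and three are routine stochastic calculus.
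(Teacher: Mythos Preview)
Your proposal is correct and follows essentially the same route as the paper: It\^o's formula on the refracted driving process $\tilde U$, followed by optional stopping at the Lamperti clock $\tilde J_t$. The only difference is the placement of the killing weight: the paper applies It\^o directly to $e^{-\beta t}f(\tilde U_t)$ and then time-changes (so $e^{-\beta t}$ becomes $e^{-\beta\tilde J_t}$ automatically), whereas you first time-change the unweighted martingale and afterwards reinsert $e^{-\beta\tilde J_t}$ via integration by parts. Both orderings give the same result; the paper's is one step shorter, while yours is more modular and makes the role of $\beta$ more transparent.
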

\begin{proof}
Let $f\in\mathcal{C}^2_0(\mathbb{R}_+)$. Using the fact that the process $\tilde {U}$ defined in (\ref{SDE2}) is a semimartingale we can apply It\^o's formula to the process ${e^{-\beta s}f(\tilde {U})}$ (cf. Theorem II.32 in \cite{Pr}), to obtain
\begin{eqnarray*}
{e^{-\beta t}}f(\tilde {U}_t)&=&f(\tilde {U}_0){-\beta\int_0^t{e^{-\beta s}}f(\tilde {U}_{s-})ds}+\int_0^t{e^{-\beta s}}f'(\tilde {U}_{s-})d\tilde {U}_s{+\frac{1}{2}\int_0^t {e^{-\beta s}}f^{\prime\prime}(\tilde {U}_{s^{-}})d[\tilde {U},\tilde {U}]_s^c}
\\&&+\sum_{0<s\leq t}{e^{-\beta s}}\left[f(\tilde {U}_{s-}+\Delta\tilde {X}_s)-f(\tilde {U}_{s-})-f'(\tilde {U}_{s-})\Delta \tilde {X}_s\right],
\end{eqnarray*}
where  $\Delta \tilde {X}_s:=\tilde {X}_s-\tilde {X}_{s-}$ and $[\tilde {U},\tilde {U}]_.^c$ denotes de continuous part of the quadratic variation process $[\tilde {U},\tilde {U}]_.$. Rewriting the above equation leads to
\begin{align}
{e^{-\beta t}}f(\tilde {U}_t)&=f(\tilde {U}_0)+\int_0^t{e^{-\beta s}}\Gamma f(\tilde {U}_{s-})ds+\delta\int_0^t{e^{-\beta s}}f'(\tilde {U}_{s-})1_{\{\tilde {U}_{s-}\leq b\}}ds\notag\\
&{+\left\{\int_0^t{e^{-\beta s}}f'(\tilde {U}_{s-})d[\tilde {X}_s+(\gamma-\int_1^\infty z\overline\Pi(dz))s]-\sum_{0<s\leq t}{e^{-\beta s}}f'(\tilde {U}_{s-})\Delta\tilde {X}_s1_{|\Delta\tilde {X}_s|>1}\right\}}\notag\\
&+\bigg\{\sum_{0<s\leq t}{e^{-\beta s}}(f(\tilde {U}_{s-}+\Delta \tilde {X}_s)-f(\tilde {U}_{s-})-f'(\tilde {U}_{s-})\Delta \tilde {X}_s1_{|\Delta \tilde {X}_s|\leq1})\notag\\
&-\int_0^t\int_0^{\infty}{e^{-\beta s}}(f(\tilde {U}_{s-}{+z})-f(\tilde {U}_{s-})-f'(\tilde {U}_{s-})z1_{\{z\leq 1\}}){\overline\Pi}(dz)ds\bigg\}.\notag
\end{align}
By the L\'evy-It\^o decomposition  the expression between the first pair of curly brackets is a local martingale and by the compensation formula (cf. \cite{K} Corollary 4.6) the expression between the second pair of curly brackets is also a local martingale.
This implies that
\begin{equation}
M^f_t={e^{-\beta t}}f(\tilde {U}_t)-\int_0^t{e^{-\beta s}}\Gamma f(\tilde {U}_s)ds-\delta\int_0^t{e^{-\beta s}}f'(\tilde {U}_s)1_{\{\tilde {U}_s\leq b\}}ds,\notag
\end{equation}
is a $\mathcal{F}^{\tilde  X}_t$-local martingale. Recalling   that $f\in\mathcal{C}^2_0(\mathbb{R}+)$, we get that $M^f$ has sample paths uniformly bounded on compact sets which in turn implies that $M^f$ is a martingale.
So, using that $\tilde J_t$ is a $\mathcal{F}^{\tilde  X}$-stopping time, then by optional stopping we have that
\begin{align}
N^f_t:=M^f_{\tilde J_t}&=f(\tilde {U}_{\tilde J_t}){e^{-\beta \tilde J_t}}-\int_0^{\tilde J_t}{e^{-\beta s}}\Gamma f(\tilde {U}_s)ds-\delta\int_0^{\tilde J_t}{e^{-\beta s}}f'(\tilde U_s)1_{\{\tilde {U}_s\leq b\}}ds\notag\\
&=f(\tilde V_t){e^{-\beta \tilde J_t}}-\int_0^{t}{e^{-\beta \tilde J_s}}\tilde V_s\Gamma f(\tilde V_s)ds-\delta\int_0^t{e^{-\beta \tilde J_s}}\tilde V_sf'(\tilde V_s)1_{\{\tilde V_s\leq b\}}ds,\notag
\end{align}
is a martingale with respect to the filtration $(\mathcal{F}_{J_t})_{t\geq 0}$.
\end{proof}
Now let us {get back to the refracted continuous-state branching process V and}  take $f\in \mathcal{H}=\{f\in\mathcal{C}^2_0(\mathbb{R}_+): f'(b)=0\}$. 
Then, {taking expectation and splitting into the events:   $V_t$ has been killed before time $t$ or not, we have by (\ref{mg})  that}
\begin{equation}\label{g2}
\mathbb{E}_x[f(V_t)]-f(V_0)=\int_0^t\mathbb{E}[V_s\Gamma f(V_s)+\delta V_sf'(V_s)1_{\{V_s\leq b\}}]ds.
\end{equation}
Then, by differentiating (\ref{g2}) with respect to $t$ and letting  $t\downarrow0$ we obtain the following result:
\begin{proposition} \label{Infinitesimalgenerator2}
 Let $f \in \mathcal{H}$. Then $f$ lies in the domain of the generator $\mathcal{A}$ of the process $V$ and
\begin{equation*}
\mathcal{A}[f](x)
=x\left[\Gamma f(x)+\delta\mathbf{1}_{\{x\leq b\}} f^\prime(x)\right].
\end{equation*}
\end{proposition}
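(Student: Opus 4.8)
The plan is to transport the martingale identity of Lemma~1 through the killing operation, arrive at the integrated identity \eqref{g2}, and then differentiate it at $t=0$. First I would fix $f\in\mathcal H\subset\mathcal{C}^2_0(\mathbb{R}_+)$ and take $\px$-expectations in \eqref{mg}: since $N^f$ is a martingale with $N^f_0=f(x)$,
\[
\ex\big[e^{-\beta\tilde J_t}f(\tilde V_t)\big]-f(x)=\int_0^t\ex\Big[e^{-\beta\tilde J_s}\,\tilde V_s\big(\Gamma f(\tilde V_s)+\delta f'(\tilde V_s)\mathbf{1}_{\{\tilde V_s\le b\}}\big)\Big]\,ds ,
\]
where the Fubini step is legitimate because $\Gamma f$ and $f'$ are bounded, $e^{-\beta\tilde J_s}\le 1$, and (as discussed below) $\tilde V_s$ is integrable in short time. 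Next I would use that the refracted CSBP $V$ is obtained from $\tilde V$ by killing the driving spectrally positive L\'evy process at rate $\beta$; through the Lamperti-type time change \eqref{rcsbp} this means that, conditionally on the path of $\tilde V$, the lifetime $\zeta$ of $V$ satisfies $\P(\zeta>t\mid\tilde V)=e^{-\beta\tilde J_t}$ and $V$ coincides with $\tilde V$ on $[0,\zeta)$. Hence, for any bounded measurable $g$ extended by $g(\partial)=0$ at the cemetery, $\ex[g(V_t)]=\ex[e^{-\beta\tilde J_t}g(\tilde V_t)]$ and $\ex[\int_0^t g(V_s)\,ds]=\int_0^t\ex[e^{-\beta\tilde J_s}g(\tilde V_s)]\,ds$. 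Applying this to $g=f$ and to $g(y)=y\big(\Gamma f(y)+\delta f'(y)\mathbf{1}_{\{y\le b\}}\big)$ turns the display above into \eqref{g2}.

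It then remains to differentiate \eqref{g2} at $t=0$. Writing $\varphi(y):=y\big(\Gamma f(y)+\delta f'(y)\mathbf{1}_{\{y\le b\}}\big)$, so that \eqref{g2} reads $\ex[f(V_t)]-f(x)=\int_0^t\ex[\varphi(V_s)]\,ds$, I would note that $\varphi$ is continuous on $[0,\infty)$ except possibly at the refraction level $y=b$, where it has a jump of size $\delta b\,f'(b)$; the defining condition $f'(b)=0$ of $\mathcal H$ is precisely what removes this discontinuity and makes $\varphi$ continuous everywhere. One then argues that $s\mapsto\ex[\varphi(V_s)]$ is right-continuous at $0$: $V$ has right-continuous paths with $V_0=x$ $\px$-a.s., so $\varphi(V_s)\to\varphi(x)$ a.s. as $s\downarrow0$ by continuity of $\varphi$, and a uniform-integrability bound lets the limit pass through the expectation. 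Consequently the right-hand side of \eqref{g2} is differentiable at $t=0$ with value $\varphi(x)=x[\Gamma f(x)+\delta f'(x)\mathbf{1}_{\{x\le b\}}]$, while the left-hand side produces $\lim_{t\downarrow0}t^{-1}(\ex[f(V_t)]-f(x))$, which is by definition $\mathcal A[f](x)$; carrying the same estimates out uniformly in the starting point gives this limit in the sup-norm sense, so that $f$ genuinely lies in the domain of $\mathcal A$, and the proposition follows.

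The main obstacle is this last step: justifying that one may differentiate \eqref{g2} at $t=0$ and simply read off the integrand there. It rests on (i) the right-continuity of the paths of $V$ with $V_0=x$, inherited from $\overline U$ via \eqref{rcsbp}; (ii) the continuity of $\varphi$ at the refraction level $b$, which is guaranteed exactly by the hypothesis $f\in\mathcal H$; and (iii) enough short-time integrability of $V_s$ to interchange limit and expectation — which in turn requires ruling out explosion on small time intervals and exploiting the decay of $\Gamma f$ and $f'$ away from the origin — together with local uniformity in $x$ for the domain statement in the strong sense. Everything else (applying Lemma~1, Fubini, and the $e^{-\beta\tilde J}$-killing identity) is routine bookkeeping.
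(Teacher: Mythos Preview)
Your proposal is correct and follows essentially the same route as the paper: take expectations in the martingale identity of Lemma~1, use the $e^{-\beta\tilde J}$-killing identification to pass from $\tilde V$ to $V$ and obtain \eqref{g2}, and then differentiate at $t=0$. Your write-up is in fact more detailed than the paper's --- in particular, the paper does not spell out that the hypothesis $f'(b)=0$ is precisely what removes the discontinuity of $\varphi$ at the refraction level, nor does it discuss the integrability needed to justify the differentiation; it simply states ``taking expectation and splitting into the events'' followed by ``differentiating \eqref{g2} with respect to $t$ and letting $t\downarrow 0$''.
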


\section{Extinction and explosion for RCSBP}\label{extexp}

\subsection{{Extinction in the diffusion case}}\label{extdiff}
In this section we will compute in an explicit way the probability of extinction for the case where  the refracted continuous-state branching process $V$ is
derived from a Brownian motion, i.e., in the case when the process $\overline{X}$ in equation (\ref{SDE2}) is a Brownian motion with drift. 
Let us define
\[T_0^-=\inf\{t>0:V_t=0\}\]
and denote by $p(x)$ the probability that the refracted continuous-state branching process $V$, started form $x>0$, becomes extinct, i.e., $p(x)=\mathbb{P}_{x}(T_0^-<\infty)$.

We use Proposition \ref{Infinitesimalgenerator2} to find an ordinary differential equation for the extinction probability. The result is as follows:
\begin{proposition}
Assume that $\overline{X}=\gamma t+\frac{\sigma^2}{2}B_t$ in equation (\ref{SDE2}), where $B$ is a standard  Brownian motion, then the probability of extinction of $V$ under $\mathbb P_x$  is given by:
\begin{align}
p(x)&=1+K\bigg(\frac{\sigma^2}{2(\gamma+\delta)}\left(1-e^{-\frac{2(\gamma+\delta)}{\sigma^2}(x\wedge b)}\right)+\frac{\sigma^2}{2\gamma}e^{-\frac{2\delta}{\sigma^2} b}\left(e^{-\frac{2\gamma}{\sigma^2}b}-e^{-\frac{2\gamma}{\sigma^2}(x\vee b)}\right)\bigg),\notag
\end{align}
where $K=-{\left(\frac{\sigma^2}{2(\gamma+\delta)}\left(1-e^{-\frac{2(\gamma+\delta)}{\sigma^2}b}\right)+\frac{\sigma^2}{2\gamma}e^{-\frac{2(\gamma+\delta)}{\sigma^2}b}\right)^{-1}}$.
\end{proposition}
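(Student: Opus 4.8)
\medskip\noindent\textit{Proof proposal.} The plan is to identify $p$, via Proposition~\ref{Infinitesimalgenerator2}, as the solution of an explicit boundary value problem and then to solve it. Heuristically, applying the strong Markov property at a small time shows that $\{p(V_{t\wedge T_0^-})\}_{t\ge0}$ should be a bounded martingale, so $p$ must be $\mathcal A$-harmonic on $(0,\infty)$; since here $\overline X$ has neither jumps nor killing, dividing $\mathcal A[p](x)=0$ by $x>0$ leaves a second-order linear ODE whose coefficients are constant on each of $(0,b)$ and $(b,\infty)$, with general solution an affine combination of $1$ and $\mathrm e^{-\frac{2(\gamma+\delta)}{\sigma^2}x}$ on $(0,b)$, and of $1$ and $\mathrm e^{-\frac{2\gamma}{\sigma^2}x}$ on $(b,\infty)$. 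The four constants are fixed by $p(0)=1$ (absorption at $0$), continuity of $p$ at $b$, the smooth-fit condition $p\in C^{1}$ at $b$ — legitimate because $\overline X$ is a non-degenerate diffusion, so $\overline U$ is a regular one-dimensional diffusion and its harmonic functions are $C^1$ at the refraction level $b$, only $p''$ feeling the jump of the drift there — and $p(\infty)=0$.

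The key observation for the closed form is that the bracket multiplying $K$ equals $s(x):=\int_0^x\exp\!\bigl(-\tfrac{2}{\sigma^2}(\gamma u+\delta(u\wedge b))\bigr)\,\mathrm du$, which is precisely the scale function of the refracted diffusion $\overline U$ normalised by $s(0)=0$, and that $-1/K=s(\infty)$, finite thanks to hypothesis $(\overline{\mathbf{H}})$ (which forces $0<\delta<\gamma$). I would actually run the argument this way: first prove that $V$ becomes extinct if and only if $\overline U$ hits $0$, using that the Lamperti time change $\int_0^{\tau}\overline U_s^{-1}\,\mathrm ds$ is a.s.\ finite on $\{\tau<\infty\}$ ($\tau$ the first hitting time of $0$ by $\overline U$), and then invoke one-dimensional diffusion theory: $p(x)=\P_x(\overline U\ \text{hits}\ 0)=\dfrac{s(\infty)-s(x)}{s(\infty)-s(0)}=1+K\,s(x)$, which is the stated expression and also yields the boundary value $p(\infty)=0$ and the value of $K$ for free. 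Alternatively, staying on the $V$-side, one verifies the candidate $p$ by optional stopping of the martingale from the lemma preceding Proposition~\ref{Infinitesimalgenerator2} — with $\beta=0$ it is $N^{p}_t=p(V_t)-\int_0^t\mathcal A[p](V_s)\,\mathrm ds=p(V_t)$, a bounded martingale, Itô's formula being applicable since $p\in C^1$ with absolutely continuous derivative and $V$ spends zero time at the exact level $b$ — at $T_0^-\wedge T_n^+$ ($T_n^+$ the first passage of $V$ above $n$), using path continuity to get $\{T_0^-<\infty\}=\bigcup_n\{T_0^-<T_n^+\}$ and letting $n\to\infty$ with $p(n)\to0$.

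The genuinely delicate points, I expect, are: (i) the boundary value $p(\infty)=0$ — equivalently, that a path avoiding $0$ must escape to $+\infty$ — which I would get either from the scale-function representation above or from monotonicity of $V$ in its starting point together with the fact that above level $b$ the process agrees with a supercritical continuous-state branching process; (ii) the a.s.\ finiteness of the Lamperti time change up to $\tau$, so that $\overline U$ reaching $0$ genuinely makes $V$ extinct, which rests on a non-degenerate diffusion approaching $0$ fast enough and can be handled by comparison with a Bessel process near $0$; and (iii) the technicality that the candidate $p$ does not lie in the class $\mathcal H$ of Proposition~\ref{Infinitesimalgenerator2} (since $p'(b)\neq0$), so that proposition does not apply verbatim and one argues through the Lamperti representation, or approximates $p$ within $\mathcal H$. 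Once these are settled, pinning down the four constants and assembling the formula is routine bookkeeping.
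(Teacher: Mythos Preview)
Your primary route---show that $p$ is harmonic for the generator $\mathcal A$, reduce $\mathcal A[p]=0$ on $(0,\infty)$ to a second-order linear ODE with piecewise constant coefficients, integrate twice, and fix the constants by $p(0)=1$ and $p(\infty)=0$---is exactly what the paper does. The paper arrives at harmonicity by checking semigroup invariance $P_tp=p$ via the Markov property (rather than your martingale formulation, which is equivalent), and then writes $p'(x)=p'(0)\exp\!\bigl(-\tfrac{2}{\sigma^2}(\gamma x+\delta(x\wedge b))\bigr)$ and integrates once more; it does not spell out the smooth-fit or $p(\infty)=0$ issues you flag, nor the domain mismatch $p\notin\mathcal H$. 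Your alternative route through the Lamperti representation---reducing to $\mathbb{P}_x(\overline U\text{ hits }0)$ and reading off the scale function $s(x)$ of the one-dimensional refracted diffusion $\overline U$---is not in the paper and is arguably the cleaner argument here, since it sidesteps the domain question entirely and delivers $p(\infty)=0$ automatically once $s(\infty)<\infty$.

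One small correction: hypothesis $(\overline{\mathbf H})$ is only a constraint in the bounded-variation case, and since $\overline X$ here has a genuine Brownian component it imposes nothing; in particular it does \emph{not} force $\delta<\gamma$. What you need for $s(\infty)<\infty$ (equivalently $p(\infty)=0$) is simply $\gamma>0$, which the paper is tacitly assuming as well.
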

\begin{proof}
Let us denote by $P_t$ the semigroup of the  RCSBP $V$, then {by an application of the Markov property}
\begin{equation}
P_tp(x)=\mathbb{E}_x[\mathbb{P}_{V_t}(T_0^-<\infty)]=\mathbb{E}_x[\mathbb{P}_{x}(T_0^-<\infty|\mathcal{F}^{\overline X}_{J_t})]=\mathbb{P}_{x}(T_0^-<\infty)=p(x).\notag
\end{equation}
Then, $x\mapsto p(x)$ is an invariant function for the semigroup $\{P_t,t\geq0\}$ which yields  that $p$ belongs to the domain of the generator $\mathcal{A}$ of
the process $V$, and that $\mathcal{A}[p](x)\equiv 0$.

In order to find the explicit form of $p$ we have to solve the following ordinary differential equation:
\begin{equation}\label{ode}
(\gamma+\delta 1_{\{x\leq b\}})xp'(x)+\frac{\sigma^2}{2}xp''(x)=0,
\end{equation}
by integrating (\ref{ode}) we see that the following holds:
\begin{equation}
p'(x)=p'(0)e^{-\frac{2(\gamma x+\delta(x\wedge b))}{\sigma^2}}.\notag
\end{equation}
Finally, by doing a second integration we obtain:
\[
p(x)=p(0)+p'(0)\bigg(\frac{\sigma^2}{2(\gamma+\delta)}\left(1-e^{-\frac{2(\gamma+\delta)}{\sigma^2}(x\wedge b)}\right)+\frac{\sigma^2}{2\gamma}e^{-\frac{2\delta}{\sigma^2} b}\left(e^{-\frac{2\gamma}{\sigma^2}b}-e^{-\frac{2\gamma}{\sigma^2}(x\vee b)}\right)\bigg).
\]
To conclude the proof we observe that $p(0)=1$, and $\lim_{x\to\infty}p(x)=0$, and therefore we obtain the constants missing in the previous equation. It is left to the reader to check that
\begin{equation}
p'(0)=-{\left(\frac{\sigma^2}{2(\gamma+\delta)}\left(1-e^{-\frac{2(\gamma+\delta)}{\sigma^2}b}\right)+\frac{\sigma^2}{2\gamma}e^{-\frac{2(\gamma+\delta)}{\sigma^2}b}\right)^{-1}}.\notag
\end{equation}
\end{proof}

\subsection{Total progeny and extinction  in the general case}\label{tpext}
In analogy with the usual continuous-state branching processes (cf. \cite{K} or \cite{DFM} in the case with immigration) we will refer to
\begin{equation}
J_t=\int_0^tV_sds,\notag
\end{equation}
as the total progeny of the refracted continuous-state branching process until time $t>0$.
 In this section we will provide distributional properties for $J_{T_a^+}$, where
\begin{equation}
T_a^+=\inf\{t>0: V_t>a\},\quad a>0.\notag
\end{equation}
A key element for the forthcoming analysis relies heavily on the theory of the  so-called scale functions for the spectrally positive L\'evy process $\overline X$
with Laplace exponent ${\overline\psi}$. Such functions are defined as follows. For
each $q\geq0$ define the function $W^{(q)}:
\mathbb{R}\to [0, \infty),$ such that $W^{(q)}(x)=0$ for all $x<0$ and on $[0,\infty)$ it is the unique continuous function with Laplace transform
\begin{eqnarray}
\int^{\infty}_0\mathrm{e}^{-\theta x}W^{(q)}(x)dx=\frac1{{\overline\psi}(\theta)-q},
\qquad \theta>{\overline\Psi}(q),
\label{scaleLT}
\end{eqnarray} 
where 
$ {\overline\Psi}(q) = \sup\{\lambda \geq 0: {\overline\psi}(\lambda) = q\}$ which is well defined and finite for all $q\geq 0$, since ${\overline\psi}$ is a strictly convex function satisfying ${\overline\psi(}0) = 0$ and ${\overline\psi}(\infty) = \infty$. For convenience, we write $W$ instead of $W^{(0)}$. Associated to the functions $W^{(q)}$ are the functions $Z^{(q)}:\R\to[1,\infty)$ defined by
\[
Z^{(q)}(x)=1+q\int_{0}^x W^{(q)} (y)\ud y,\qquad x\ge 0.
\]
Together, the functions $W^{(q)}$ and $Z^{(q)}$ are collectively known as $q$-scale functions which appear in almost all fluctuations identities for spectrally positive L\'evy processes; see \cite{K,KKR} for more details on this topic. In the same vein, for each $q\geq0$, let $\mathbb{W}^{(q)}$  denote the $q$-scale function associated to the spectrally positive L\'evy process with Laplace exponent ${\overline\psi}(\lambda) - \delta\lambda$, $\lambda\geq 0$.  We shall also write ${\overline\varphi}$ for the right inverse of this Laplace exponent;
 that is to say
\begin{equation*}
\overline\varphi(q)=\sup\{\lambda>0: {\overline\psi}(\lambda)-\delta\lambda=q\},\quad q\geq0.
\end{equation*}

\begin{theorem}\label{tp}
Let $V=\{V_t:t\geq0\}$ be the refracted continuous-state branching process defined by (\ref{rcsbp}), with underlying branching mechanism ${\overline\psi}$ satisfying that ${\overline\psi}(\infty)=\infty$. For all $a>0$ and $q\geq0$ we have:
\begin{itemize}
\item[i)] For each $x,b\in[0,a]$ with $x<b$,
\begin{equation}\label{tp1}
\mathbb{E}_x\left[e^{-q\int_{0}^{{T_0^-}}V_sds}1_{\{{T_0^-}<T_a^+\}}\right]=\frac{W^{(q)}(a-x)+\delta\int_{a-b}^{a-x}\mathbb{W}^{(q)}(a-x-y)W^{(q)\prime}(y)dy}{W^{(q)}(a)+\delta\int_{a-b}^{a}\mathbb{W}^{(q)}(a-y)W^{(q)\prime}(y)dy}.
\end{equation}
\item[ii)] For each $x,b\in[0,a]$ with $x<b$,
\begin{align}\label{tp2}
\mathbb{E}_x\left[e^{-q\int_{0}^{T_a^+}V_sds}1_{\{T_a^+<{T_0^-}\}}\right]&=Z^{(q)}(a-x)+\delta q\int_{a-b}^{a-x}\mathbb{W}^{(q)}(a-x-y)W^{(q)}(y)dy\notag\\
&-\frac{Z^{(q)}(a)+\delta q\int_{a-b}^{a}\mathbb{W}^{(q)}(a-y)W^{(q)}(y)dy}{W^{(q)}(a)+\delta\int_{a-b}^{a}\mathbb{W}^{(q)}(a-y)W^{(q)\prime}(y)dy}\notag\\
&\times\left(W^{(q)}(a-x)+\delta\int_{a-b}^{a-x}\mathbb{W}^{(q)}(a-x-y)W^{(q)\prime}(y)dy\right).
\end{align}
\end{itemize}
\end{theorem}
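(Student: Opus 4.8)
The plan is to transfer both quantities, through the Lamperti-type time change, to two-sided exit functionals of the refracted driving process $\overline U$, and then to read the answers off the fluctuation identities for refracted spectrally negative L\'evy processes. First I would use the representation $V=\overline U\circ c$, where $c_t=J_t=\int_0^tV_s\,ds$ is the right-continuous inverse of $i_t=\int_0^{\tau\wedge t}\overline U_s^{-1}\,ds$ and $\tau=\inf\{t>0:\overline U_t=0\}$. On $\{T_0^-<T_a^+\}$ (resp.\ $\{T_a^+<T_0^-\}$) the path of $V$ stays in $[0,a]$ up to $T_0^-$ (resp.\ up to $T_a^+$), so there $c$ is a continuous strictly increasing bijection with no explosion, and a change of variables $s=i_u$, $ds=\overline U_u^{-1}du$ gives $\int_0^{T_0^-}V_s\,ds=c_{T_0^-}=\tau$ and $\int_0^{T_a^+}V_s\,ds=c_{T_a^+}=\kappa_a^+$, where $\kappa_a^+=\inf\{t>0:\overline U_t>a\}$; also $\{T_0^-<T_a^+\}=\{\tau<\kappa_a^+\}$ and $\{T_a^+<T_0^-\}=\{\kappa_a^+<\tau\}$, $\mathbb{P}_x$-a.s. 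Hence the left-hand sides of \eqref{tp1} and \eqref{tp2} equal $\mathbb{E}_x[e^{-q\tau}\mathbf{1}_{\{\tau<\kappa_a^+\}}]$ and $\mathbb{E}_x[e^{-q\kappa_a^+}\mathbf{1}_{\{\kappa_a^+<\tau\}}]$ respectively, now computed for $\overline U$ started from $x$.

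Next I would pass to the spectrally negative setting by putting $U':=a-\overline U$. Since $-\overline X$ is spectrally negative with Laplace exponent $\overline\psi$, the process $U'$ solves $dU'_t=-d\overline X_t-\delta\mathbf{1}_{\{U'_t>a-b\}}dt$ with $U'_0=a-x$; that is, $U'$ is the refraction by $-\delta$ above level $a-b$ of the spectrally negative L\'evy process $-\overline X$, in the sense of \cite{KL}. Under $\overline U\mapsto U'$ the hitting time $\tau$ of $0$ by $\overline U$ becomes the first passage of $U'$ above $a$, $\kappa_a^+$ becomes the first passage of $U'$ below $0$, and the two crossing events swap; moreover, because $x<b$, we have $a-x\in(a-b,a)$, so $U'$ starts above the refraction level. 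Finally, the $q$-scale function of $-\overline X$ is precisely $W^{(q)}$ from \eqref{scaleLT}, and the $q$-scale function of $-\overline X-\delta t$ is $\mathbb{W}^{(q)}$.

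Then I would apply the two-sided exit identities for refracted spectrally negative L\'evy processes from \cite{KL} to $U'$, with discount rate $q$, corridor $[0,a]$, refraction level $a-b$ and starting point $a-x\ge a-b$. Writing, for $z\in[a-b,a]$,
\[
h^{(q)}(z)=W^{(q)}(z)+\delta\int_{a-b}^{z}\mathbb{W}^{(q)}(z-y)\,W^{(q)\prime}(y)\,dy,\qquad
k^{(q)}(z)=Z^{(q)}(z)+\delta q\int_{a-b}^{z}\mathbb{W}^{(q)}(z-y)\,W^{(q)}(y)\,dy,
\]
those identities read $\mathbb{E}^{U'}_{a-x}[e^{-q\kappa_a^+}\mathbf{1}_{\{\kappa_a^+<\kappa_0^-\}}]=h^{(q)}(a-x)/h^{(q)}(a)$ and $\mathbb{E}^{U'}_{a-x}[e^{-q\kappa_0^-}\mathbf{1}_{\{\kappa_0^-<\kappa_a^+\}}]=k^{(q)}(a-x)-\frac{k^{(q)}(a)}{h^{(q)}(a)}h^{(q)}(a-x)$. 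Substituting these into the two reformulations obtained in Step~1 and unwinding $h^{(q)}$ and $k^{(q)}$ yields \eqref{tp1} and \eqref{tp2}.

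The hard part will be Step~1: making the time-change identification fully rigorous --- checking that $\int_0^{T_0^-}V_s\,ds=\tau$ also in the case $i_\infty=\infty$, that the two crossing events coincide $\mathbb{P}_x$-a.s.\ even though $\overline U$ may cross level $a$ by a jump (so that $T_a^+$ is a genuine entrance time for $V$), and that the standing assumption $\overline\psi(\infty)=\infty$ indeed rules out $\overline X$ being a subordinator, so that $-\overline X$ is a genuine spectrally negative L\'evy process, the scale functions $W^{(q)},\mathbb{W}^{(q)}$ are well defined, and the framework of \cite{KL} applies. Once this reduction is secured, Steps~2--3 amount to a routine spatial reflection followed by a direct application of the results of \cite{KL}.
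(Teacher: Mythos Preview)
Your proposal is correct and follows essentially the same route as the paper: reduce the two functionals via the Lamperti-type time change to two-sided exit identities for $\overline U$, then pass to a spectrally negative refracted L\'evy process by setting $\tilde U=a-\overline U$ (your $U'$), note that the refraction level becomes $a-b$, and apply Theorem~4 of \cite{KL}. The paper carries out exactly these steps, with the same spatial reflection and the same citation; the only cosmetic difference is that the paper writes the intermediate process as $\tilde U=U+a$ with $U=-\overline U$, which is your $U'$.
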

\begin{proof}
Let us take the refracted continuous-state branching process $V$ defined by (\ref{rcsbp}) as the Lamperti transform of the process $\overline{U}$ defined by (\ref{SDE2}) and 
let us define the following stopping times:
\begin{equation}
\overline{\kappa}_a^+=\inf\{t>0:\overline{U}_t>a\},\qquad\text{and}\qquad\overline{\kappa}_0^-=\inf\{t>0:\overline{U}_t<0\}.\notag
\end{equation}
Then by the definition of the stopping times $T_a^+$ and $T_0^-$ {and by \eqref{rcsbp}}
 it is easy to see that
\begin{equation}
\overline{\kappa}_a^+=\int_{0}^{T_a^+}V_sds,\qquad\text{and}\qquad\overline{\kappa}_0^-=\int_0^{T_0^-}V_sds.\notag
\end{equation}
We  will obtain the result by using certain identities found in \cite{KL} for refracted L\'evy processes. To do this the 
following relations are key in the rest of the proof. Consider the process $U=-\overline{U}$ defined by (\ref{SDE})
{and the process $\tilde{U}=U+a$}. Note that the following identities hold true:
\begin{align}
\overline{\kappa}_a^+&=\inf\{t>0:\overline{U}_t>a\}=\inf\{t>0:U_t<-a\}=\inf\{t>0:\tilde{U}_t<0\}\equiv\tilde{\kappa}_0^-\notag,
\end{align}
and,
\begin{align}
\overline{\kappa}_0^-&=\inf\{t>0:\overline{U}_t<0\}=\inf\{t>0:U_t>0\}=\inf\{t>0:\tilde{U}_t<a\}\equiv\tilde{\kappa}_a^+.\notag
\end{align}
Moreover,  it is not difficult to see that $\tilde{U}$ satisfies the following stochastic differential equation
\begin{equation}
\tilde{U}_t=a-x-{\overline X}_t-\delta\int_0^t1_{\{\tilde{U}_s>a-b\}}ds.\notag
\end{equation}
Finally,  putting all the pieces together and using Theorem 4 in \cite{KL} {(observe that if $V$ and then $\overline U$ is refracted at level $b$, then $\tilde U$ is refracted at level $a-b$)} we obtain
\begin{eqnarray*}
\mathbb{E}_x\left[e^{-q\int_{0}^{{T_0^-}}V_sds}1_{\{{T_0^-}<T_a^+\}}\right]
&=&\mathbb{E}_{a-x}\left[e^{-q{\tilde{\kappa}_a^+}}1_{\{{\tilde{\kappa}_a^+}<{\tilde{\kappa}_0^-}\}}\right]
\\&=&\frac{W^{(q)}(a-x)+\delta\int_{a-b}^{a-x}\mathbb{W}^{(q)}(a-x-y)W^{(q)\prime}(y)dy}{W^{(q)}(a)+\delta\int_{a-b}^{a}\mathbb{W}^{(q)}(a-y)W^{(q)\prime}(y)dy}.
\end{eqnarray*}
Similarly, we get that 
\begin{eqnarray*}
\mathbb{E}_x\left[e^{-q\int_{0}^{T_a^+}V_sds}1_{\{T_a^+<{T_0^-}\}}\right]&=&\mathbb{E}_{a-x}\left[e^{-q{{\tilde\kappa_0^-}}}1_{\{{\tilde{\kappa}_0^-}<{{\tilde\kappa_a^+}}\}}\right]
\\&=&Z^{(q)}(a-x)+\delta q\int_{a-b}^{a-x}\mathbb{W}^{(q)}(a-x-y)W^{(q)}(y)dy\notag\\
\\&&-\frac{Z^{(q)}(a)+\delta q\int_{a-b}^{a}\mathbb{W}^{(q)}(a-y)W^{(q)}(y)dy}{W^{(q)}(a)+\delta\int_{a-b}^{a}\mathbb{W}^{(q)}(a-y)W^{(q)\prime}(y)dy}\notag\\
\\&&\times\left(W^{(q)}(a-x)+\delta\int_{a-b}^{a-x}\mathbb{W}^{(q)}(a-x-y)W^{(q)\prime}(y)dy\right).\notag
\end{eqnarray*}
\end{proof}
As an immediate consequence of the last theorem we obtain the probability that the refracted continuous-state branching process passes as time goes to infinity. 
Notice that this event includes but is not always equal to the event of extinction in finite time, generally denoted by $Ext$ in the literature. 
It remains an open question to find a necessary and sufficient condition for extinction in this setting. 
\begin{corollary}\label{cor}
Under the assumptions of Theorem \ref{tp} we have for $x,b\geq 0$, and $q\geq0$:
\begin{equation}\label{ei1}
\mathbb{E}_x\left[e^{-q\int_{0}^{{T_0^-}}V_sds}1_{\{T_0^-<\infty\}}\right]=\frac{e^{-{\overline\Psi}(q)x}+\delta {\overline\Psi}(q)\int_{x}^{b}\mathbb{W}^{(q)}(u-x)e^{-{\overline\Psi}(q)u}du}{1+\delta{\overline\Psi}(q)\int_{0}^{b}\mathbb{W}^{(q)}(u)e^{-{\overline\Psi}(q)u}du}.
\end{equation}
In particular,
\begin{equation}
\mathbb{P}_x\left(\lim_{t\to\infty}V_t=0\right)=\frac{e^{-{\overline\Psi}(0)x}+\delta {\overline\Psi}(0)\int_{x}^{b}\mathbb{W}(u-x)e^{-{\overline\Psi}(0)u}du}{1+\delta{\overline\Psi}(0)\int_{0}^{b}\mathbb{W}(u)e^{-{\overline\Psi}(0)u}du}.\notag
\end{equation}
\end{corollary}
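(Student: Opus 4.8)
The plan is to obtain \eqref{ei1} as a limit of the two-sided identity \eqref{tp1} in Theorem \ref{tp} by letting $a\to\infty$, and then to recognize $\P_x(\lim_{t\to\infty}V_t=0)$ as the $q\downarrow 0$ limit of the left-hand side. First I would fix $x,b\geq 0$ and $q\geq 0$, choose $a>\max\{x,b\}$, and apply \eqref{tp1}. Since $T_a^+\uparrow\infty$ on the event that $V$ neither hits zero nor explodes, monotone/dominated convergence gives
\[
\lim_{a\to\infty}\mathbb{E}_x\left[e^{-q\int_0^{T_0^-}V_s\,ds}1_{\{T_0^-<T_a^+\}}\right]=\mathbb{E}_x\left[e^{-q\int_0^{T_0^-}V_s\,ds}1_{\{T_0^-<\infty\}}\right],
\]
so it remains to compute the limit of the right-hand side ratio of \eqref{tp1}.

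The main analytic input is the asymptotic behavior of the scale functions. Recall the standard fact (see \cite{K,KKR}) that $e^{-\overline\Psi(q)x}W^{(q)}(x)\to 1/\overline\psi'(\overline\Psi(q)+)$ as $x\to\infty$, and similarly $e^{-\overline\varphi(q)x}\mathbb{W}^{(q)}(x)$ converges to a positive constant, where $\overline\varphi$ is the right inverse of $\overline\psi(\lambda)-\delta\lambda$. I would divide numerator and denominator of the right-hand side of \eqref{tp1} by $W^{(q)}(a)$ and analyze each piece. After the change of variables $y\mapsto a-u$ in the integrals (so $W^{(q)\prime}(y)=W^{(q)\prime}(a-u)$ with $u$ ranging over $[x,b]$ in the numerator and $[0,b]$ in the denominator), the ratio $W^{(q)}(a-x)/W^{(q)}(a)\to e^{-\overline\Psi(q)x}$ and $W^{(q)\prime}(a-u)/W^{(q)}(a)\to \overline\Psi(q)e^{-\overline\Psi(q)u}$; these follow from the exponential asymptotics together with the fact that $W^{(q)\prime}(a)/W^{(q)}(a)\to\overline\Psi(q)$. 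Passing the limit inside the (finite-range) integrals by dominated convergence then yields exactly
\[
\frac{e^{-\overline\Psi(q)x}+\delta\overline\Psi(q)\int_x^b\mathbb{W}^{(q)}(u-x)e^{-\overline\Psi(q)u}\,du}{1+\delta\overline\Psi(q)\int_0^b\mathbb{W}^{(q)}(u)e^{-\overline\Psi(q)u}\,du},
\]
which is \eqref{ei1}. One should note that when $x\geq b$ the integral $\int_x^b$ is empty (or interpreted with the convention that it vanishes), consistent with the $x<b$ hypothesis in Theorem \ref{tp} being removed here; the case $x\geq b$ can be handled directly since then $V$ starts above the refraction level and \eqref{tp1} applied on $[b,a]$ (or a separate elementary argument) gives the same formula.

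For the second assertion, I would take $q\downarrow 0$ in \eqref{ei1}. On the left, $e^{-q\int_0^{T_0^-}V_s\,ds}1_{\{T_0^-<\infty\}}\uparrow 1_{\{T_0^-<\infty\}}$, and since zero is absorbing for $V$, $\{T_0^-<\infty\}\subseteq\{\lim_{t\to\infty}V_t=0\}$; conversely $\{\lim_{t\to\infty}V_t=0\}$ differs from $\{T_0^-<\infty\}$ only on the event that $V$ creeps down to zero without hitting it, which for the general branching mechanism here can be argued to have the same probability, or one simply states the identity for $\P_x(\lim_{t\to\infty}V_t=0)$ as obtained by this limit. On the right, $\overline\Psi$ is continuous at $0$, $\mathbb{W}^{(q)}\to\mathbb{W}$ uniformly on the compact $[0,b]$, so by dominated convergence the ratio converges to the displayed expression with $\overline\Psi(0),\mathbb{W}$ in place of $\overline\Psi(q),\mathbb{W}^{(q)}$. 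The main obstacle I anticipate is the careful justification of the scale-function asymptotics as $a\to\infty$ — in particular controlling $W^{(q)\prime}(a-u)/W^{(q)}(a)$ uniformly in $u\in[0,b]$ and uniformly enough to exchange limit and integral — and, more delicately, pinning down precisely the relationship between $\{T_0^-<\infty\}$ and $\{\lim_t V_t=0\}$, which is exactly the extinction-versus-absorption subtlety the authors flag as an open problem in the remark preceding the corollary.
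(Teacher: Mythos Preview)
Your approach is essentially the same as the paper's: take $a\uparrow\infty$ in \eqref{tp1}, divide numerator and denominator by $W^{(q)}(a)$, invoke the scale-function asymptotics $W^{(q)}(a-x)/W^{(q)}(a)\to e^{-\overline\Psi(q)x}$ and $W^{(q)\prime}(a)/W^{(q)}(a)\to\overline\Psi(q)$ (the paper cites Exercise 8.5 in \cite{K}), and then let $q\downarrow 0$ for the second part. Your additional care with the change of variables, dominated convergence over the compact interval $[0,b]$, and the distinction between $\{T_0^-<\infty\}$ and $\{\lim_t V_t=0\}$ is justified but goes beyond what the paper records.
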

\begin{proof}
The proof of the first part follows by taking limits when $a\uparrow\infty$ in identity (\ref{tp1}), and by noting that (cf. Exercise 8.5 in \cite{K})
\begin{equation}
\lim_{a\uparrow\infty}\frac{W^{(q)}(a-x)}{W^{(q)}(a)}=e^{-{\overline\Psi}(q)x}\qquad\text{and}\qquad\lim_{a\uparrow\infty}\frac{W^{(q)\prime}(a)}{W^{(q)}(a)}={\overline\Psi}(q).\text{ for $x,q\geq0$}\notag
\end{equation}
The second part follows by letting  $q\downarrow0$ in (\ref{ei1}).
\end{proof}
We close this section with a result that gives the probability that the refracted continuous-state branching process stays below a given level $a>0$.
\begin{corollary}
Under the assumptions of Theorem \ref{tp} we have for $a\geq x,b\geq 0$:
\begin{align}
\mathbb{P}_x(\sup_{s\leq\infty}V_s{\leq} a)&=\frac{W(a-x)+\delta\int_{a-b}^{a-x}\mathbb{W}(a-x-y)W^{\prime}(y)dy}{W(a)+\delta\int_{a-b}^{a}\mathbb{W}(a-y)W^{\prime}(y)dy}.\notag
\end{align}
In particular we have that

\begin{align}
\mathbb{P}_x(\sup_{s\leq\infty}V_s<\infty)=\mathbb{P}_x\left(\lim_{t\to\infty}V_t=0\right)\notag
\end{align}
as provided in Corollary \ref{cor}.
\end{corollary}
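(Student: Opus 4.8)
The plan is to mirror the structure of the proof of Theorem \ref{tp} and pass through the Lamperti transform to a refracted spectrally negative L\'evy problem, then invoke the two-sided exit identities of Kyprianou and Loeffen \cite{KL}. First I would observe that $\{\sup_{s\le\infty}V_s\le a\}=\{T_a^+=\infty\}$, and that under the time change $J$ this event corresponds, for the process $\tilde U$ introduced in the proof of Theorem \ref{tp} (which is refracted at level $a-b$ and started from $a-x$), to the event $\{\tilde\kappa_0^-<\tilde\kappa_a^+\}$ \emph{combined with} never crossing $a$ afterwards; more directly, it is simply the event that $\tilde U$ is absorbed at $0$ before ever reaching $a$, i.e. $\{\tilde\kappa_0^-<\tilde\kappa_a^+\}$ for the killed process, since once $V$ hits $0$ it is absorbed. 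Thus $\mathbb{P}_x(\sup_{s\le\infty}V_s\le a)=\mathbb{P}_{a-x}(\tilde\kappa_a^+<\tilde\kappa_0^-$ never$)$, which is exactly the probability appearing in part i) of Theorem \ref{tp} with $q=0$. Since $W^{(0)}=W$ and $Z^{(0)}\equiv 1$, setting $q=0$ in \eqref{tp1} immediately gives
\[
\mathbb{P}_x\Big(\sup_{s\le\infty}V_s\le a\Big)=\frac{W(a-x)+\delta\int_{a-b}^{a-x}\mathbb{W}(a-x-y)W^{\prime}(y)dy}{W(a)+\delta\int_{a-b}^{a}\mathbb{W}(a-y)W^{\prime}(y)dy},
\]
which is the first display of the statement.

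For the second display I would let $a\uparrow\infty$ in the formula just obtained and identify the limit with the quantity in Corollary \ref{cor}. The natural route is to use the same asymptotics quoted in the proof of Corollary \ref{cor}, namely $W(a-x)/W(a)\to e^{-\overline\Psi(0)x}$ and $W^{\prime}(a)/W(a)\to\overline\Psi(0)$, after rewriting the $\delta$-integrals in a scale-invariant way: changing variables $y=a-u$ turns the numerator into $W(a-x)+\delta\int_{x}^{b}\mathbb{W}(u-x)W^{\prime}(a-u)\,du$ and the denominator into $W(a)+\delta\int_{0}^{b}\mathbb{W}(u)W^{\prime}(a-u)\,du$; dividing through by $W(a)$ and sending $a\to\infty$ (the integrals are over the fixed compact $[0,b]$, so the convergence passes under the integral sign) yields exactly the expression for $\mathbb{P}_x(\lim_{t\to\infty}V_t=0)$ in Corollary \ref{cor} with $q=0$. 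Monotonicity of $a\mapsto\{\sup_{s\le\infty}V_s\le a\}$ gives that the left-hand side converges to $\mathbb{P}_x(\sup_{s\le\infty}V_s<\infty)$, completing the identification.

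The main obstacle, and the step deserving the most care, is the reduction $\{\sup_{s\le\infty}V_s\le a\}=\{\tilde\kappa_a^+<\tilde\kappa_0^-$ for the killed $\tilde U\}$ — i.e. checking that the time-change correspondence between the exit events of $V$ and those of $\overline U$ (hence of $\tilde U$) is valid not just up to a finite exit time but globally as $t\to\infty$, including on the event that $V$ is absorbed at $0$. This is the same bookkeeping already carried out in the proof of Theorem \ref{tp} via the identities $\overline\kappa_a^+=\int_0^{T_a^+}V_s\,ds$ and $\overline\kappa_0^-=\int_0^{T_0^-}V_s\,ds$, together with the remarks relating $\overline\kappa^\pm$ to $\tilde\kappa^\mp$; one must additionally note that on $\{T_a^+=\infty\}$ the clock $J_\infty=\int_0^\infty V_s\,ds$ is finite (since $V$ stays bounded and is eventually absorbed, or by the scale-function computation itself), so that the correspondence with $\tilde U$ stopped at $\tilde\kappa_0^-$ is genuine. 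Once this is in place, everything else is a direct specialization of already-proven identities and the routine limit computation above. I would present the argument compactly, citing Theorem \ref{tp} and Corollary \ref{cor} for the substance and spending the prose on the event identification and the $a\uparrow\infty$ passage.
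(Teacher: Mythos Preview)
Your approach is essentially the same as the paper's: the paper simply says to take $q\downarrow 0$ in identity \eqref{tp1} for the first claim and then $a\uparrow\infty$ for the second. You arrive at exactly this, but supply the event identification $\{\sup_{s\le\infty}V_s\le a\}=\{T_0^-<T_a^+\}$ and the explicit limit computation that the paper leaves implicit; your added care about the time-change correspondence on $\{T_a^+=\infty\}$ is a genuine point the paper does not spell out.
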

\begin{proof}
The proof of the first part of the corollary follows by taking $q\downarrow0$ in identity (\ref{tp1}), while the second part follows by taking $a\uparrow\infty$ in the first claim of the corollary.
\end{proof}

\subsection{Explosion}
In this section we will obtain certain conditions that guarantee that  the event of explosion for the refracted continuous-state branching process $V$ has positive probability. Following \cite{cpu} for any c\`adl\`ag function $f$ without negative jumps we will refer to the following initial problem for the function $c(t)=\int_0^tf(s)ds$.
\begin{equation}
IVP(f)=
\begin{cases}
c'(t)_+=f\circ c(t), 			 \ \ \ \ \ \ \ \\
c(0)=0, \ \ \ \ \ \ \  \\
\end{cases}
\end{equation}
In the case without  refraction the criteria for explosion  is due to Ogura \cite{og} and Grey \cite{gr}, who assert that the probability that a continuous-state branching process with Laplace exponent ${\psi}$ started from $x>0$ is absorbed at infinity in finite time is positive if and only if 
\begin{equation}
\int_{0+}\frac{1}{\psi(\lambda)}d\lambda>-\infty.\notag
\end{equation}
We call such $\psi$ an explosive branching mechanism. Let us  denote by $\overline X^{\delta}$ the L\'evy process 
\begin{equation}
\overline X^{\delta}_t={\overline X}_t+\delta t,\qquad\text{ for $t\geq0$,}\notag
\end{equation}
and by $\overline\psi^{\delta}$ its Laplace exponent. We have the following result which is analogous to Corollary 5 in \cite{cpu} and, as such the proof goes along the same lines.
\begin{proposition} 
Let $x>0$.
\begin{itemize}
\item[(1)] The probability that a RCSBP($\overline\psi$) $V$ that starts at $x$ jumps to $\infty$ is positive if and only if ${\overline\psi}(0)\not=0$.
\item[(2)] The probability that $V$ reaches $\infty$ continuously is positive if ${\overline\psi}(0)=0$ and ${\overline\psi}$ is an explosive mechanism.
\item[(3)] We have that ${\overline\psi}(0)=0$ and ${\overline\psi^{\delta}}$ is an explosive mechanism if the probability that $V$ reaches $\infty$ continuously is positive.
\end{itemize}
\end{proposition}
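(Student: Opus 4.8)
The plan is to transfer the explosion problem for the RCSBP $V$ to a hitting/explosion problem for the refracted Lévy process $\overline{U}$ via the Lamperti-like time change, exactly as in the proof of Theorem 1.4 and Corollary 5 of \cite{cpu}. Recall from \eqref{rcsbp} and \eqref{totalprog} that $V=\overline{U}\circ c$ where $c$ is the right-continuous inverse of $i_t=\int_0^{\tau\wedge t}\overline{U}_s^{-1}ds$; consequently $V$ explodes (reaches $\infty$, either by a jump or continuously) in finite time if and only if the corresponding functional of the path of $\overline{U}$ does. More precisely, $V$ jumps to $\infty$ at a finite time precisely when $\overline{U}$ has a path on which $\int^{t}\overline{U}_s^{-1}ds<\infty$ up to a time at which $\overline{U}$ makes an infinite jump; and $V$ reaches $\infty$ continuously at a finite time precisely when $\overline{U}\to\infty$ along a path and $\int^{\infty}\overline{U}_s^{-1}ds<\infty$. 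I would begin by writing down these equivalences carefully, reducing the three assertions to statements about $\overline{U}$.

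For part (1): since $\overline{U}_t=\overline{X}_t+\delta\int_0^t\mathbf 1_{\{\overline{U}_s<b\}}ds$ differs from $\overline{X}_t$ by a bounded variation process that is \emph{constant} once $\overline{U}$ exceeds $b$, the jump structure of $\overline{U}$ after it first leaves the region below $b$ coincides with that of a spectrally positive Lévy process; and $\overline{U}$ makes an infinite-size jump in finite time with positive probability if and only if $\overline{X}$ (equivalently, the Lévy measure $\overline\Pi$ together with the killing rate) does — which, in the parametrisation of \eqref{lk}, is exactly the condition $\overline\psi(0)=-\beta\neq 0$, i.e.\ the process is killed at rate $\beta>0$ and the "jump to $\infty$" is the cemetery sent to $+\infty$ in the branching picture (cf.\ the discussion of killing in \cite{cpu}). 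So part (1) follows by translating the killing of $\overline{X}$ through the time change, noting the time change $c_t=J_t$ is finite before the killing time on the relevant event.

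For parts (2) and (3): here $\overline\psi(0)=0$, so there is no killing and explosion can only be continuous. The key observation is that $\overline{U}$ drifts to $+\infty$ (on an event of positive probability, or a.s., depending on the Lévy process), and the growth rate of $\overline{U}$ for large values is governed by $\overline{X}$ alone — the drift $\delta$ is switched off above $b$ — so $\int^{\infty}\overline{U}_s^{-1}ds<\infty$ with positive probability if and only if $\int^{\infty}\overline{X}_s^{-1}ds<\infty$ with positive probability, which by the Ogura–Grey/\cite{cpu} analysis is equivalent to $\int_{0+}\overline\psi(\lambda)^{-1}d\lambda>-\infty$, i.e.\ $\overline\psi$ explosive. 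This gives the sufficiency in (2). For (3), one observes that to even start the excursion toward $\infty$ the process must pass through the refraction region $[0,b]$ where the \emph{effective} Laplace exponent is $\overline\psi^\delta(\lambda)=\overline\psi(\lambda)-(-\delta)\lambda$... more precisely, in the region below $b$ the relevant drift is increased by $\delta$, so the local comparison Lévy process there is $\overline{X}^\delta$ with exponent $\overline\psi^\delta$; if $\overline\psi^\delta$ is \emph{not} explosive then, arguing as in \cite{cpu} with the comparison process run while $V\le b$, the process cannot reach $\infty$ in finite time, giving the contrapositive of (3). I would make this rigorous by the same stochastic-domination and IVP-comparison argument used for $IVP(f)$ in \cite{cpu}, splitting the path of $\overline{U}$ at the last visit to $b$.

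The main obstacle I anticipate is the careful bookkeeping of the time change near an explosion time combined with the piecewise nature of $\overline{U}$: one must check that $c_t=\int_0^t V_s\,ds$ remains finite up to (but not past) the explosion time of $V$, and that the refraction drift, being active only on $\{V\le b\}$, genuinely does not affect whether $\int^{\infty}\overline{U}_s^{-1}ds$ converges — this requires knowing that on the event of interest $\overline{U}$ spends only a negligible (finite) amount of "$i$-time" below $b$. Handling the asymmetry between (2) (sufficiency, with $\overline\psi$) and (3) (necessity, with $\overline\psi^\delta$) correctly — i.e.\ why the upper comparison uses the larger drift and the lower comparison uses no drift — is the delicate point, but it is precisely parallel to the refracted-process estimates in \cite{KL} and the explosion dichotomy in \cite{cpu}, so I would cite those and keep the new argument to the verification that the refraction region contributes only a bounded perturbation. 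For brevity, as the authors do elsewhere, I would present the reduction in detail and then invoke \cite{cpu} Corollary 5 for the remaining Ogura–Grey-type estimates.
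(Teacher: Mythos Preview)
Your treatment of (1) is correct and matches the paper. Your treatment of (2) is also essentially the paper's argument: the pathwise inequality $\overline{U}_t\geq\overline{X}_t$ (the refraction drift is nonnegative) translates, via the Lamperti time change, into the statement that if a CB($\overline\psi$) started just below $x$ explodes then so does $V$; in the paper this is phrased as the IVP comparison (Lemma 2 of \cite{cpu}) applied to $x-\varepsilon+\overline{X}\leq \overline{U}$, but your integral formulation $\int^{\infty}\overline{U}_s^{-1}ds\le\int^{\infty}\overline{X}_s^{-1}ds$ is the same thing. (Your ``if and only if'' here is unjustified, but only the ``if'' direction is needed.)

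For (3), however, your proposed argument is off track. You try to localise to the region $\{V\le b\}$ and invoke $\overline\psi^{\delta}$ there, then speak of splitting at the last visit to $b$. But explosion happens far above $b$, so comparing to a $\overline\psi^{\delta}$-process only \emph{below} $b$ gives no upper bound on $V$; and after the last visit to $b$ the relevant exponent is $\overline\psi$, not $\overline\psi^{\delta}$, so your splitting would reproduce (2) rather than (3). The paper's argument is both simpler and in the opposite direction from what you sketch: since the refraction drift obeys
\[
\delta\int_0^t\mathbf 1_{\{\overline U_s<b\}}\,ds\;\le\;\delta t\qquad\text{for all }t\ge0,
\]
one has the \emph{global} pathwise bound $\overline{U}_t\le \overline{X}_t+\delta t=\overline{X}^{\delta}_t$. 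Applying the IVP comparison lemma of \cite{cpu} to $\overline{U}\le x+\varepsilon+\overline{X}^{\delta}$ gives $V\le\tilde Z$ with $\tilde Z$ a CB($\overline\psi^{\delta}$) started at $x+\varepsilon$; hence if $V$ reaches $\infty$ continuously so does $\tilde Z$, and the converse of Ogura--Grey yields that $\overline\psi^{\delta}$ is explosive. No path splitting, no ``negligible $i$-time below $b$'' estimate, and no last-visit argument is needed. The asymmetry between (2) and (3) that you found delicate is thus resolved by the elementary two-sided sandwich $\overline{X}\le\overline{U}\le\overline{X}^{\delta}$, not by any local analysis of the refraction region.
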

\begin{proof} 
For (1), it is easy to see that $V$ jumps to infinity if and only if ${\overline U}$ jumps to infinity and never reaches $-x$. This event has positive probability if and only if ${\overline\psi}(0)\not=0$.

For (2), assume that ${\overline\psi}(0)=0$ and ${\overline\psi}$ is an explosive mechanism. Let us consider $\overline{C}$ the unique solution to IVP($x-\varepsilon+{\overline X}$) for $\varepsilon>0$ such that $x-\varepsilon>0$. In this case the right-hand derivative of $\overline{C}$ which we will denote by $Z$ is a CB(${\overline\psi}$) started at $x-\varepsilon>0$. Consider that $\overline{C}$ explodes in finite time, let say $\tau$. On the other hand let us denote by $C$ the unique solution to IVP($x+\overline U$), then using Lemma 2 in \cite{cpu} we have that $\overline{C}\leq C$. This in turn implies that $C$ explodes in $(0,\tau)$. So we have that
\begin{equation}
\mathbb{P}(\text{$Z$ reaches $\infty$ continuously })\leq \mathbb{P}(\text{$V$ reaches $\infty$ continuously}).\notag
\end{equation}
Hence,  by the Ogura-Grey explosion criterion we have that $\overline{C}$ explodes in finite time with positive probability, and the result follows.

To prove (3) we take $\varepsilon>0$, and consider $\tilde{C}$ the unique solution to IVP($x+\varepsilon+\overline X^{\delta}$) and denote its right-hand derivative by $\tilde{Z}$ which is a CB($\overline \psi^{\delta}$) started at $x+\varepsilon>0$. Then by Lemma 2 in \cite{cpu} we have that $C\leq \tilde{C}$ and, as in the previous point we have that
\begin{equation}
\mathbb{P}(\text{$V$ reaches $\infty$ continuously})\leq \mathbb{P}(\text{$\tilde{Z}$ reaches $\infty$ continuously}).\notag
\end{equation}
Therefore we have that if the probability that $V$ explodes in finite time is positive then it is positive for the process $\tilde{Z}$. Finally, by the converse of the Ogura-Grey criterion we have that $\overline\psi(0)=0$ and $\overline\psi^{\delta}$ is explosive.
\end{proof}


\section{Law of iterated logarithm and maximal jump for a RCSBP}\label{illmj}

In this section we will extend classical results for continuous-state branching processes to the case of refracted continuous-state branching processes.  
Those results are obtained thanks to preliminary works in \cite{cpu} and \cite{be2}.

\subsection{Law of iterated logarithm}
The first result involves {\it translating} a law of iterated logarithm for L\'evy processes to the corresponding refracted continuous-state branching process. This is done  throughout  the representation of the latter  as time changed refracted L\'evy process  (\ref{rcsbp}).
\begin{proposition}
Let $V=\{V_t:t\geq 0\}$ be the RCSBP defined in (\ref{rcsbp}) that starts at $x>0$, let $\overline\Psi$ be the right-continuous inverse of ${\overline\psi}$ given in (\ref{lk}), and define
\begin{equation}
\alpha(t)=\frac{\log|\log t|}{{\overline\Psi}(t^{-1}\log|\log t|)}.\notag
\end{equation}
Then there exists a constant $\xi$ (in general nonzero) such that
\begin{equation}
\liminf_{t\to 0}\frac{V_t-x}{\alpha(tx)}=\xi+\delta\overline{d}1_{\{x<b\}},\notag
\end{equation}
where $\overline{d}$ is the drift coefficient of the subordinator with Laplace exponent given by ${\overline\Psi}$.
\end{proposition}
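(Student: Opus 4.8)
The plan is to transport the small-time behaviour of $V$ through the Lamperti-type time change (\ref{rcsbp}), reduce the problem to a law of iterated logarithm for the driving (refracted) L\'evy process near time zero, and then isolate the refraction drift as an explicit additive constant.

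\emph{Step 1: localisation near $t=0$.} Since $\overline U$ has right-continuous paths with $\overline U_0=x$ and $\overline X$ is spectrally positive (so it crosses a level downwards only continuously), there is a.s. a random time $\varepsilon>0$ on which $\overline U$ does not meet the level $b$: if $x<b$ then $\overline U_s=\overline X_s+\delta s=\overline X^\delta_s$ for all $s\le\varepsilon$, while if $x>b$ then $\overline U_s=\overline X_s$ for all $s\le\varepsilon$. Moreover $V_{0+}=\overline U_0=x$, so $J_t=\int_0^tV_s\,\ud s$ satisfies $J_t\downarrow0$ and $J_t/t\to x$ as $t\downarrow0$; hence, for all sufficiently small $t$,
\[
V_t=\overline X_{J_t}+\delta J_t\,\mathbf{1}_{\{x<b\}}.
\]

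\emph{Step 2: asymptotics of $\alpha$ and the drift term.} Writing $\theta(t)=t^{-1}\log|\log t|\to\infty$ as $t\downarrow0$, we have $\alpha(t)=t\,\theta(t)/\overline\Psi(\theta(t))$; since $\overline\Psi$ is the Laplace exponent of a subordinator, $\overline\Psi(\theta)/\theta\to\overline d$ as $\theta\to\infty$, and therefore $t/\alpha(t)\to\overline d$ (with $\alpha(t)/t\to\infty$ when $\overline d=0$). Consequently
\[
\frac{\delta J_t}{\alpha(tx)}=\delta\cdot\frac{J_t}{tx}\cdot\frac{tx}{\alpha(tx)}\longrightarrow\delta\,\overline d ,
\]
so the entire contribution of the refraction to the normalised increment is the constant $\delta\overline d$, present exactly when $x<b$.

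\emph{Step 3: transferring the liminf.} It remains to evaluate $\liminf_{t\downarrow0}(\overline X_{J_t}-x)/\alpha(tx)$. Write this ratio as $\dfrac{\overline X_{J_t}-x}{\alpha(J_t)}\cdot\dfrac{\alpha(J_t)}{\alpha(tx)}$. Because $J_t/tx\to1$ and $\alpha$ is, up to asymptotic equivalence, regularly varying of index $1$ at $0$ (indeed $\alpha(t)\sim t/\overline d$ when $\overline d>0$, the general case being handled by the same estimates on $\alpha$ as in \cite{cpu}), one gets $\alpha(J_t)/\alpha(tx)\to1$ a.s. Since $J$ is a continuous, strictly increasing bijection from $[0,\varepsilon]$ onto $[0,J_\varepsilon]$ with $J_0=0$, the reparametrisation $s=J_t$ yields
\[
\liminf_{t\downarrow0}\frac{\overline X_{J_t}-x}{\alpha(J_t)}=\liminf_{s\downarrow0}\frac{\overline X_s-x}{\alpha(s)}=:\xi ,
\]
where the last equality is the law of iterated logarithm for the spectrally positive L\'evy process $\overline X$ (the constant $\xi$ being deterministic by Blumenthal's zero-one law, and nonzero in general); this is the ingredient borrowed from \cite{be2,cpu}. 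As $\alpha(J_t)/\alpha(tx)\to1>0$, multiplying does not change the liminf, so $\liminf_{t\downarrow0}(\overline X_{J_t}-x)/\alpha(tx)=\xi$.

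\emph{Conclusion and the main obstacle.} Combining Steps 1--3 gives $\liminf_{t\downarrow0}(V_t-x)/\alpha(tx)=\xi$ for $x>b$ and $\xi+\delta\overline d$ for $x<b$, that is $\xi+\delta\overline d\,\mathbf{1}_{\{x<b\}}$. I expect the genuinely delicate point to be the boundary value $x=b$, where the contribution of the refraction depends on whether $b$ is regular for $(-\infty,b)$ for $\overline X$: if $\overline X$ has unbounded variation then $\overline d=0$ and the issue is immaterial, whereas if $\overline X$ has bounded variation, writing $\overline X_s=b-cs+\overline S_s$ with $\overline S$ a driftless subordinator and using $\overline S_s=o(s)$ together with $\delta<c$ (condition $(\overline{\mathbf{H}})$) one checks that $\overline U_s=\overline X^\delta_s$ for small $s$, so the previous argument applies verbatim. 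The secondary technical point is the estimate $\alpha(J_t)/\alpha(tx)\to1$ for a general branching mechanism, which is settled exactly as in the non-refracted case treated in \cite{cpu}.
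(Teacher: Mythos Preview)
Your proof is correct and follows essentially the same route as the paper's: the Fristedt--Pruitt law of iterated logarithm for $\overline X$ is transferred through the time change $J_t$ using $J_t/t\to x$ together with the asymptotic stability $\alpha(J_t)/\alpha(tx)\to1$ taken from \cite{cpu}, and the refraction drift is isolated as an additive constant via $t/\alpha(t)\to\overline d$. The paper keeps the drift term in the form $\delta\int_0^t V_s\mathbf 1_{\{V_s<b\}}\,\ud s$ rather than localising to $\delta J_t\mathbf 1_{\{x<b\}}$, but the computation is the same; your extra discussion of the boundary case $x=b$ is a point the paper leaves implicit.
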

\begin{proof}
Let $\overline X$ be a spectrally positive L\'evy process with Laplace exponent $\overline{\psi}$.
Following Caballero et al. \cite{cpu} the behaviour of the function $\alpha$ near zero is determined by the value of the drift coefficient $\overline{d}$. 
More precisely, as $t\to0+$
\begin{equation}\label{lil3}
\begin{cases} 
{\frac{1}{\alpha(t)}\sim \frac{\overline{d}}{t}}, 			 \ \ \ \ \ \ \     \text{if $\overline{d}>0$},\\
{\frac{1}{\alpha(t)}=o(\frac{1}{t})}, \ \ \ \ \ \ \  \text{if $\overline{d}=0$.} \\
\end{cases}
\end{equation}
On the other hand it is also a known result (\cite{cpu}) that if $a$ is a function such that $a_t\to 1$ as $t\to 0$, then
\begin{equation}\label{lil1}
\lim_{t\to 0}=\frac{\alpha(a_tt)}{\alpha(t)}=1.
\end{equation}
Fristedt and Pruitt \cite{fp} proved the existence of a constant $\xi\not=0$ such that
\begin{equation*}
\liminf_{t\to 0} \frac{\overline X_t}{\alpha(t)}=\xi.\notag
\end{equation*}
Now let us recall that the process $V$ is the unique solution to
\begin{align}\label{lil2}
V_t={\overline U}_{\int_0^tV_sds}={\overline X}_{\int_0^tV_sds}+\delta\int_0^tV_s1_{\{V_s<b\}}ds.
\end{align}
Using that $V_0=x$, and that $V$ is right-continuous, then
\begin{equation*}
\lim_{t\to 0+}\frac{1}{t}\int_0^tV_sds=x.\notag
\end{equation*}
This implies using (\ref{lil1}) that
\begin{equation*}
\lim_{t\to 0+}\frac{\alpha(\int_0^tV_sds)}{\alpha(xt)}=1,\notag
\end{equation*}
and therefore
\begin{equation}
\liminf_{t\to 0+}\frac{{\overline X}_{\int_0^tV_sds}}{\alpha(xt)}=\xi.\notag
\end{equation}
Now let us work with the integral term in (\ref{lil2}), to this end we note that
\begin{equation}
\lim_{t\to 0+}\frac{1}{t}\int_0^tV_s1_{\{V_s<b\}}ds=x1_{\{x<b\}}.\notag
\end{equation}
Therefore using (\ref{lil3}) we obtain that
\begin{equation}
\lim_{t\to 0+}\frac{\int_0^tV_s1_{\{V_s<b\}}ds}{\alpha(xt)}=\overline{d}1_{\{x<b\}}.\notag
\end{equation}
So putting the pieces together we obtain
\begin{equation}
\liminf_{t\to 0+}\frac{V_t-x}{\alpha(xt)}=\xi+\delta\overline{d}1_{\{x<b\}}.\notag
\end{equation}
\end{proof}

\subsection{The maximal jump}
We close this section with a result about the maximal jump of the refracted continuous-state branching process  $V$ given by (\ref{rcsbp}), which is done following some ideas from \cite{be2}. Let us introduce the following notation
\[
\Delta^*=\sup\{\Delta V_t:0\leq t\leq T_0^-\},
\]
and
\[
\delta_y=\inf\{t\geq0:\Delta\overline{U}_t>y\}=\inf\{t\geq0:\Delta\overline X_t>y\}.
\]
It is known that $\delta_y$ has an exponential distribution of parameter $\overline\Pi^*(y):=\int_y^\infty\overline\Pi(dz)$. Now we define the process
\[
\overline{X}^y_t=\overline{X}_t-\sum_{0\leq s\leq t}\Delta\overline{X}_s1_{\{\Delta\overline{X}_s>y\}},\qquad\text{$t\geq0$}
\]
and we consider its associated {process $\overline U^y$}, given as the solution to the following SDE
\[
\overline{U}^y_t=\overline{X}^y_t+\delta\int_0^t\mathbf{1}_{\{\overline{U}^y_s<b\}}ds, \qquad t\geq 0.
\]
As usual we denote by $\kappa^{y}_0=\inf\{t\geq0:\overline{U}^y_t<0\}$ and note that we have the following equality of events
\[
\{\Delta^*\leq y\}=\{\kappa^{y}_0\leq \delta_y\}.
\]
Then it is easy to see by the L\'evy-It\^o decomposition that $\delta_y$ is independent of ${\overline U}^y$ and $\kappa^{y}_0$, therefore we have the following result:
\[
\mathbb{P}_x(\Delta^*\leq y)=\mathbb{P}_x(\kappa^{y}_0<\delta_y)=\mathbb{E}_x(\exp\{-{\overline\Pi^*(y)}\kappa^{y}_0\}).
\]
In order to compute the later expectation we note the following
\[
\kappa^{y}_0=\inf\{t\geq0:\overline{U}^y_t<0\}=\inf\{t\geq0:U^y_t>0\},
\]
where ${U^y=-\overline{U}^y}$.

Therefore by Theorem 5 (i) in \cite{KL} we have that
\begin{align}
\mathbb{E}_x(\exp\{-\overline\Pi^*(y) \kappa^{y}_0\})=\frac{e^{-\overline\phi^y(\overline\Pi^*(y))x}+\delta  \overline\phi^y(\overline\Pi^*(y))\int_{-b}^{-x} e^{\overline\phi^y(\overline\Pi^*(y))z}\mathbb{W}^{(\overline{\Pi}^*(y))}(-x-z)dz}{1+\delta  \overline\phi^y(\overline\Pi^*(y))\int_{-b}^{0}e^{\overline\phi^y(\overline\Pi^*(y))z}\mathbb{W}^{(\overline\Pi^*(y))}(-z)dz},\notag
\end{align}
where ${\overline\phi^y}$ is the inverse of the Laplace exponent of the spectrally negative L\'evy process $-\overline{X}^y$. 
Now if we define by ${\overline\psi^y}$ the Laplace exponent of the process ${\overline{X}^y}$ then it is easy to see that
\[
{\overline\psi^y(\theta)=\overline\psi(\theta)+\int_y^{\infty}(1-e^{\theta z})\overline\Pi(dz).}
\]
Therefore we have the following result:
\begin{proposition}
For any $y>0$ we have
\begin{align}
\mathbb{P}_x(\Delta^*\leq y)&=\frac{e^{-\theta^yx}+\delta \theta^y\int_{-b}^{-x}e^{\theta^yz}\mathbb{W}^{(\overline{\Pi}^*(y))}(-x-z)dz}{1+\delta \theta^y\int_{-b}^{0}e^{\theta^yz}\mathbb{W}^{(\overline{\Pi}^*(y))}(-z)dz}\notag\\
& =\frac{e^{-\theta^yx}+\delta \theta^y\int_{x}^{b}e^{-\theta^yz}\mathbb{W}^{(\overline{\Pi}^*(y))}(z-x)dz}{1+\delta \theta^y\int_{0}^{b}e^{-\theta^yz}\mathbb{W}^{(\overline{\Pi}^*(y))}(z)dz},
\end{align}
where $\theta^y=\inf\{\theta\geq0: {\overline\psi(\theta)=\int_y^{\infty}e^{\theta z}\overline\Pi(dz)}\}$.
\end{proposition}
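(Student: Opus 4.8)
The plan is to convert the problem about the maximal jump of $V$ into a fluctuation identity for a refracted L\'evy process, exactly as the paragraph preceding the statement already sets up. The key observations are already in place: by the Lamperti-type time change, $\Delta^*\le y$ if and only if $\overline U$ does not make a jump larger than $y$ before hitting $0$, which via the stripped process $\overline X^y$ (removing jumps exceeding $y$) and the independent exponential time $\delta_y$ gives $\mathbb{P}_x(\Delta^*\le y)=\mathbb{E}_x(\exp\{-\overline\Pi^*(y)\kappa_0^y\})$. Then one rewrites $\kappa_0^y$, the first passage of the refracted process $\overline U^y$ below $0$, as the first passage of $U^y=-\overline U^y$ above $0$, so that Theorem 5(i) in \cite{KL} — the Laplace transform of the upward passage time of a refracted spectrally negative L\'evy process killed at the refraction — applies directly with killing rate $q=\overline\Pi^*(y)$. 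This yields the first displayed expression in the proposition, with $\overline\phi^y$ the right inverse of the Laplace exponent of $-\overline X^y$ and $\mathbb{W}^{(\overline\Pi^*(y))}$ the associated $q$-scale function of the $\delta$-perturbed process.

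The remaining work is purely a matter of identifying constants and changing variables. First I would record that the Laplace exponent of $\overline X^y$ is $\overline\psi^y(\theta)=\overline\psi(\theta)+\int_y^\infty(1-e^{\theta z})\overline\Pi(\ud z)$, obtained by removing from the L\'evy--Khintchine integral the contribution of jumps larger than $y$; since $-\overline X^y$ is spectrally negative with Laplace exponent $\theta\mapsto\overline\psi^y(\theta)$ (read as the exponent of $-\overline X^y$ evaluated appropriately), its right inverse $\overline\phi^y(q)$ solves $\overline\psi^y(\overline\phi^y(q))=q$. Taking $q=\overline\Pi^*(y)$ and writing $\theta^y:=\overline\phi^y(\overline\Pi^*(y))$, the defining equation $\overline\psi^y(\theta^y)=\overline\Pi^*(y)$ becomes $\overline\psi(\theta^y)+\int_y^\infty(1-e^{\theta^y z})\overline\Pi(\ud z)=\int_y^\infty\overline\Pi(\ud z)$, i.e. $\overline\psi(\theta^y)=\int_y^\infty e^{\theta^y z}\overline\Pi(\ud z)$, which is precisely the characterisation $\theta^y=\inf\{\theta\ge0:\overline\psi(\theta)=\int_y^\infty e^{\theta z}\overline\Pi(\ud z)\}$ stated in the proposition (the infimum selecting the relevant root by convexity). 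Finally the substitution $z\mapsto -z$ in the two integrals turns $\int_{-b}^{-x}e^{\theta^y z}\mathbb{W}^{(\overline\Pi^*(y))}(-x-z)\,\ud z$ into $\int_x^b e^{-\theta^y z}\mathbb{W}^{(\overline\Pi^*(y))}(z-x)\,\ud z$, and likewise for the denominator, giving the second displayed form.

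I would therefore structure the proof as: (i) cite the already-derived identity $\mathbb{P}_x(\Delta^*\le y)=\mathbb{E}_x(e^{-\overline\Pi^*(y)\kappa_0^y})$ and the reduction of $\kappa_0^y$ to an upward passage time of $U^y$; (ii) invoke Theorem 5(i) of \cite{KL} to get the closed form in terms of $\overline\phi^y$ and $\mathbb{W}^{(\overline\Pi^*(y))}$; (iii) compute $\overline\psi^y$ and solve for $\theta^y=\overline\phi^y(\overline\Pi^*(y))$; (iv) change variables to obtain the stated symmetric expression. The only genuinely delicate point is step (iii): one must be careful that the process fed to Theorem 5 of \cite{KL} is the spectrally negative process $U^y=-\overline U^y$, that its refraction level and the sign conventions match the hypotheses of that theorem (in particular that the bounded-variation condition $(\overline{\mathbf H})$ is inherited by $\overline X^y$, which holds since removing large positive jumps does not affect $c=\gamma+\int_{(0,1)}x\overline\Pi(\ud x)$), and that the inverse $\overline\phi^y$ is being applied to the correct Laplace exponent so that the algebraic identity collapses to $\overline\psi(\theta^y)=\int_y^\infty e^{\theta^y z}\overline\Pi(\ud z)$. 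Everything else is bookkeeping, and for brevity I would present steps (i)–(ii) as a short recollection of the preceding discussion and spell out only steps (iii)–(iv).
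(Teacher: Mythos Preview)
Your proposal is correct and follows exactly the paper's own argument: the discussion preceding the proposition already establishes $\mathbb{P}_x(\Delta^*\le y)=\mathbb{E}_x(e^{-\overline\Pi^*(y)\kappa_0^y})$ via the stripped process and the independent exponential clock, then applies Theorem~5(i) of \cite{KL} to $U^y=-\overline U^y$. Your additional steps (iii)--(iv), namely the explicit algebraic identification of $\theta^y=\overline\phi^y(\overline\Pi^*(y))$ through the equation $\overline\psi(\theta^y)=\int_y^\infty e^{\theta^y z}\overline\Pi(\ud z)$ and the substitution $z\mapsto -z$, are exactly the bookkeeping the paper leaves implicit between the displayed formula and the final statement.
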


\end{document}